\newtheorem{theorem}{Theorem}[section]
\newtheorem{definition}[theorem]{Definition}
\newtheorem{lemma}[theorem]{Lemma}
\newtheorem{proposition}[theorem]{Proposition}
\newtheorem{corollary}[theorem]{Corollary}
\newtheorem{remark}[theorem]{Remark}
\newtheorem{example}[theorem]{Example}
\newcommand{\hh}{{\mathbb{H}}}
\newcommand{\s}{{\mathbb{S}}}
\newcommand{\cc}{{\mathbb{C}}}
\newcommand{\rr}{{\mathbb{R}}}
\newcommand{\nn}{{\mathbb{N}}}
\newcommand{\zz}{{\mathbb{Z}}}
\title{\bf A Phragm\'en - Lindel\"of principle for slice regular functions}
\author{Graziano Gentili  \\ 
\normalsize Dipartimento di Matematica ``U. Dini'', Universit\`a di Firenze \\ 
\normalsize Viale Morgagni 67/A, 50134 Firenze, Italy,  gentili@math.unifi.it \\
\and Caterina Stoppato \\ 
\normalsize Dipartimento di Matematica ``U. Dini'', Universit\`a di Firenze \\ 
\normalsize Viale Morgagni 67/A, 50134 Firenze, Italy,  stoppato@math.unifi.it\\
\and Daniele C. Struppa \\ 
\normalsize Department of Mathematics and Computer Science\\ 
\normalsize Schmid College of Science, Chapman University\\  
\normalsize One University Drive, Orange, CA 92866 USA, struppa@chapman.edu}
\date{  }
\begin{document}
\maketitle

\section{Introduction}

The celebrated $100$-year old Phragm\'en-Lindel\"of theorem, \cite{phragmen,phragmen2}, is a far reaching extension of the maximum modulus theorem for holomorphic functions that in its simplest form can be stated as follows:

\begin{theorem}\label{classiccomplex}
Let $\Omega \subset \cc$ be a simply connected domain whose boundary contains the point at infinity. If $f$ is a bounded holomorphic function on $\Omega$ and $\limsup_{z \to z_0}|f(z)| \leq M$ at each finite boundary point $z_0$, then $|f(z)| \leq M$ for all $z \in \Omega$.
\end{theorem}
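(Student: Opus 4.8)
This is a classical result; the route I would follow is to transplant the problem to the unit disc and then use the boundary theory of bounded holomorphic functions there. First note that $\Omega$ must be a proper subdomain of $\cc$: if $\Omega=\cc$ it has no finite boundary point and the assertion is false (take $f\equiv 2M$), so this case is tacitly excluded and $\Omega$ has at least one finite boundary point. By the Riemann mapping theorem there is a biholomorphism $\psi:\mathbb{D}\to\Omega$; put $g:=f\circ\psi$, a holomorphic function on $\mathbb{D}$ with $|g|\le\sup_\Omega|f|<\infty$, i.e.\ $g\in H^\infty(\mathbb{D})$.

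The first step is to control the boundary behaviour of $\psi$. Since $\psi$ is univalent it belongs to the Hardy space $H^p(\mathbb{D})$ for every $p<1/2$ (a classical theorem on univalent functions), so by Fatou's theorem for $H^p$ spaces it has a finite radial limit $\psi^*(\zeta)=\lim_{r\to 1^-}\psi(r\zeta)$ for a.e.\ $\zeta\in\partial\mathbb{D}$. Whenever this limit exists and is finite it lies on $\partial\Omega$ rather than in $\Omega$: if one had $\psi(r\zeta)\to z_0\in\Omega$, continuity of $\psi^{-1}$ near $z_0$ would give $r\zeta=\psi^{-1}(\psi(r\zeta))\to\psi^{-1}(z_0)\in\mathbb{D}$, contradicting $\zeta\in\partial\mathbb{D}$. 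So for a.e.\ $\zeta$ the point $z_0:=\psi^*(\zeta)$ is a finite boundary point of $\Omega$.

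Next I would combine this with Fatou's theorem applied to $g$ itself: since $g\in H^\infty(\mathbb{D})$, the radial limit $g^*(\zeta)$ exists a.e. Fix $\zeta$ in the full-measure set on which both $g^*(\zeta)$ and $\psi^*(\zeta)$ exist, with $z_0:=\psi^*(\zeta)$ a finite boundary point of $\Omega$. Then $\psi(r\zeta)\to z_0$ with $\psi(r\zeta)\in\Omega$, hence by hypothesis $|g^*(\zeta)|=\lim_{r\to 1^-}|f(\psi(r\zeta))|\le\limsup_{z\to z_0}|f(z)|\le M$. Thus $|g^*|\le M$ a.e.\ on $\partial\mathbb{D}$, and since a bounded holomorphic function on the disc equals the Poisson integral of its boundary values, $|g(w)|\le\frac{1}{2\pi}\int_0^{2\pi}\frac{1-|w|^2}{|e^{i\theta}-w|^2}\,|g^*(e^{i\theta})|\,d\theta\le M$ for all $w\in\mathbb{D}$; that is, $|f|\le M$ on $\Omega$.

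The main obstacle is the boundary analysis of $\psi$ in the second step: with no regularity hypothesis on $\partial\Omega$ the map $\psi$ need not extend continuously to $\overline{\mathbb{D}}$, and the heart of the matter is that the single uncontrolled boundary point $\infty$ is \emph{invisible} --- it must correspond to a null subset of $\partial\mathbb{D}$ --- which is exactly what the a.e.\ finiteness of $\psi^*$ provides. (A shorter alternative bypasses the conformal map: $\log|f|$ is subharmonic and bounded above on $\Omega$, the complement of $\Omega$ in $\hat{\cc}$ is connected with more than one point and hence non-polar, and $\{\infty\}$ is polar, so the generalized maximum principle for subharmonic functions with a polar exceptional boundary set gives $\log|f|\le\log M$ directly.)
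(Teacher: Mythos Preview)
The paper does not supply its own proof of this theorem: it is quoted in the introduction as the classical Phragm\'en--Lindel\"of principle, with references to the original papers and to standard texts, and is then used as a black box in Section~4. So there is no in-paper argument to compare against.

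Your argument is correct. The two points that actually carry weight are (i) that the Riemann map $\psi$ has \emph{finite} radial limits a.e.\ (which you get from $\psi\in H^p$ for $p<1/2$, a consequence of the Koebe growth bound for univalent functions), so that the single boundary point $\infty$ corresponds to a null set on $\partial\mathbb{D}$; and (ii) that those limits land on $\partial\Omega$, which you verify cleanly via continuity of $\psi^{-1}$. From there the Fatou/Poisson step is routine. One small remark: in the line $|g^*(\zeta)|=\lim_{r\to1^-}|f(\psi(r\zeta))|\le\limsup_{z\to z_0}|f(z)|$, the middle limit exists because $g^*(\zeta)$ does, so the $\limsup$ hypothesis applies to this particular approach sequence; you use this implicitly and it is fine.

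Your parenthetical alternative via subharmonic functions and polar exceptional sets is the argument one most often sees in potential-theoretic treatments and is arguably the cleaner route, since it avoids the $H^p$ machinery for univalent maps entirely; either proof would be an acceptable justification of the background result the paper invokes.
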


The term Phragm\'en-Lindel\"of also applies to a number of variations of this result, which guarantee a bound for holomorphic functions, when conditions are known on their growth. The two most famous variations deal with functions which are holomorphic in an angle or in a strip, and can be stated as follows (see, for instance, \cite{conway,levin} as well as \cite{ahlfors, heins}).

\begin{theorem}\label{angle}
Let $f$ be a holomorphic function on an angle $\Omega$ of opening $\frac{\pi}{\alpha}$. Suppose $f$ is continuous up to the boundary and such that, for some  $\rho<\alpha$, $|f(z)|\leq \exp(|z|^{\rho})$ asymptotically. If there exists an $M\geq 0$ such that $|f|\leq M$ in $\partial \Omega$ then $|f|\leq M$ in $\Omega$. 
\end{theorem}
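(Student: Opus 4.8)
The plan is to run the classical auxiliary-multiplier argument, which converts the unbounded sector into a bounded one so that the ordinary maximum modulus theorem applies. First I would normalize the geometry: a rotation $z \mapsto e^{i\phi}z$ carries any angle onto the symmetric sector $\Omega = \{z = re^{i\theta} : r>0,\ |\theta| < \tfrac{\pi}{2\alpha}\}$, and it changes neither the boundary bound $|f|\le M$ on $\partial\Omega$ nor the growth estimate $|f(z)| \le \exp(|z|^{\rho})$, since the latter depends only on $|z|$. The advantage of this position is that $\Omega$ lies in $\cc\setminus(-\infty,0]$, so the principal branch of $z^{\beta}$ is single-valued and holomorphic on $\Omega$ for every $\beta>0$.

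Next I would fix an exponent $\beta$ with $\rho < \beta < \alpha$ (possible exactly because $\rho<\alpha$, and, if necessary, also $\beta>0$) and, for $\epsilon>0$, consider $g_\epsilon(z) = f(z)\exp(-\epsilon z^{\beta})$, which is holomorphic on $\Omega$ and continuous on $\overline{\Omega}$. Writing $z = re^{i\theta}$ one has $z^{\beta} = r^{\beta}e^{i\beta\theta}$ with $|\beta\theta| \le \tfrac{\pi\beta}{2\alpha} < \tfrac{\pi}{2}$, hence $\cos(\beta\theta) \ge \delta := \cos\tfrac{\pi\beta}{2\alpha} > 0$ on all of $\overline{\Omega}$, so that $|\exp(-\epsilon z^{\beta})| = \exp(-\epsilon r^{\beta}\cos(\beta\theta)) \le \exp(-\epsilon\delta r^{\beta}) \le 1$. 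This yields two facts: on $\partial\Omega$ we get $|g_\epsilon| \le |f| \le M$; and for $|z|$ large the growth hypothesis gives $|g_\epsilon(z)| \le \exp(r^{\rho} - \epsilon\delta r^{\beta}) \to 0$ as $r\to\infty$, because $\beta>\rho$. In particular $\sup\{|g_\epsilon(z)| : z\in\Omega,\ |z| = R\} \to 0$ as $R\to\infty$.

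The reduction to a bounded region is then immediate: for a fixed $z_0\in\Omega$ choose $R>|z_0|$ so large that the supremum of $|g_\epsilon|$ over the arc $\{|z| = R\}\cap\Omega$ is $\le M$. On the bounded domain $\Omega_R = \Omega\cap\{|z|<R\}$, $g_\epsilon$ is holomorphic, continuous up to the boundary, and bounded by $M$ on $\partial\Omega_R$ (the two rectilinear edges by the boundary hypothesis, the circular arc by the choice of $R$); the maximum modulus theorem forces $|g_\epsilon(z_0)| \le M$. Letting $\epsilon\to 0^{+}$, $\exp(-\epsilon z_0^{\beta})\to 1$, whence $|f(z_0)| \le M$; as $z_0$ was arbitrary, $|f|\le M$ on $\Omega$.

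I expect the only genuinely delicate point to be the balancing of the angle opening against the growth order: one must produce a single $\beta$ that is simultaneously $<\alpha$, so that $\exp(-\epsilon z^{\beta})$ has modulus $\le 1$ on the closed sector and does not damage the boundary bound, and $>\rho$, so that it decays fast enough at infinity to overwhelm the $\exp(|z|^{\rho})$ growth on the interior. This is precisely where the strict inequality $\rho<\alpha$ is indispensable (and why the statement fails at $\rho=\alpha$); the remainder is the routine truncation plus the standard care in choosing a holomorphic branch of $z^{\beta}$, which is legitimate once the angle has been rotated into $\cc\setminus(-\infty,0]$.
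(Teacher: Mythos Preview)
Your argument is correct and is the classical Phragm\'en--Lindel\"of multiplier proof. Note, however, that the paper does not itself supply a proof of this statement: Theorem~\ref{angle} is quoted in the introduction as a known complex result, with references to the standard sources. What the paper \emph{does} prove is the quaternionic analog, Theorem~\ref{cones}, and that proof follows exactly the template you have written: pick $\gamma$ with $\rho<\gamma<\alpha$, multiply by $\omega^\delta(q)=e^{-\delta q^\gamma}$ using the principal logarithm, observe that $|\omega^\delta(re^{I\vartheta})|=e^{-\delta r^\gamma\cos(\gamma\vartheta)}$ with $\cos(\gamma\vartheta)>0$ on the closed cone, deduce that $|\omega^\delta f|\to 0$ at infinity and $|\omega^\delta f|\le M$ on the finite boundary, apply the maximum modulus principle, and let $\delta\to 0^+$. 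The only cosmetic difference is that you truncate explicitly to a bounded sector $\Omega_R$ before invoking maximum modulus, whereas the paper phrases this as a single appeal to Corollary~\ref{maximum} via the extended boundary; the content is the same.
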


\begin{theorem}\label{strip}
Let $f$ be a holomorphic function on a strip $\Omega$ of width $2 \gamma$, continuous up to the boundary. Suppose that $|f(z)| \leq N \exp(e^{k|z|})$ in $\Omega$ for some positive constants $N$ and $k<\frac{\pi}{2\gamma}$. If there exists an $M\geq 0$ such that $|f|\leq M$ in $\partial \Omega$ then $|f|\leq M$ in $\Omega$. 
\end{theorem}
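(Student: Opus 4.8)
The plan is the classical one: reduce to a model strip by a rigid motion, damp $f$ by a holomorphic factor whose modulus decays like $\exp(-\epsilon e^{\beta|z|})$ for a suitable $\beta$ with $k<\beta<\frac{\pi}{2\gamma}$, apply Theorem~\ref{classiccomplex} to the damped function, and finally let $\epsilon\to 0^{+}$. First I would normalize: a strip of width $2\gamma$ is the region between two parallel lines at distance $2\gamma$, so a rotation followed by a translation $T$ of $\cc$ carries $\Omega$ onto $\Sigma:=\{z=x+iy:\ |y|<\gamma\}$, and replacing $f$ by $f\circ T^{-1}$ I may assume $\Omega=\Sigma$. Since $T^{-1}$ is an affine isometry, $|T^{-1}(z)|\le|z|+c_{0}$ with $c_{0}:=|T^{-1}(0)|$, so the growth hypothesis becomes $|f(z)|\le N\exp\!\big(e^{kc_{0}}e^{k|z|}\big)$ on $\Sigma$; the constant $e^{kc_{0}}$ will be irrelevant below, precisely because $\beta$ is chosen strictly larger than $k$.

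Next I would fix $\beta$ with $k<\beta<\frac{\pi}{2\gamma}$ (possible since $k<\frac{\pi}{2\gamma}$), set $\delta:=\cos(\beta\gamma)\in(0,1)$, and for $\epsilon>0$ introduce the entire function $\phi_{\epsilon}(z):=\exp\!\big(-\epsilon\cosh(\beta z)\big)$ and the product $g_{\epsilon}:=f\,\phi_{\epsilon}$, holomorphic on $\Sigma$ and continuous on $\overline{\Sigma}$. Writing $z=x+iy$ one computes $\operatorname{Re}\cosh(\beta z)=\cosh(\beta x)\cos(\beta y)$, and on $\overline{\Sigma}$ one has $\cos(\beta y)\ge\delta$, hence $\operatorname{Re}\cosh(\beta z)\ge\delta\cosh(\beta x)\ge\tfrac{\delta}{2}e^{\beta|x|}\ge 0$. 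From this: (i) on $\partial\Sigma$ one gets $|\phi_{\epsilon}|=e^{-\epsilon\operatorname{Re}\cosh(\beta z)}\le 1$, so $|g_{\epsilon}|\le M$ there; and (ii) using $|x|\ge|z|-\gamma$,
\[
|g_{\epsilon}(z)|\ \le\ N\exp\!\Big(e^{kc_{0}}e^{k|z|}-\tfrac{\delta}{2}e^{-\beta\gamma}\,\epsilon\,e^{\beta|z|}\Big)\ \longrightarrow\ 0\qquad\text{as }|z|\to\infty\text{ in }\overline{\Sigma},
\]
because $\beta>k$; in particular $g_{\epsilon}$ — continuous on $\overline{\Sigma}$ and vanishing at infinity — is bounded on $\Sigma$.

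Thus $g_{\epsilon}$ is a bounded holomorphic function on the simply connected domain $\Sigma$, whose boundary contains the point at infinity, with $\limsup_{z\to z_{0}}|g_{\epsilon}(z)|\le M$ at every finite boundary point $z_{0}$; Theorem~\ref{classiccomplex} then gives $|g_{\epsilon}|\le M$ on $\Sigma$. (One can bypass that theorem: since $|g_{\epsilon}|\to 0$ at infinity, given $\eta>0$ and $T$ large one has $|g_{\epsilon}|\le\eta$ on the vertical sides of the rectangle $[-T,T]\times[-\gamma,\gamma]$ and $|g_{\epsilon}|\le M$ on its horizontal sides, so the maximum modulus principle gives $|g_{\epsilon}|\le\max\{M,\eta\}$ on the rectangle; letting $T\to\infty$ and then $\eta\to 0$ yields the same conclusion.) Consequently, for every fixed $z\in\Sigma$,
\[
|f(z)|\ =\ |g_{\epsilon}(z)|\,\exp\!\big(\epsilon\operatorname{Re}\cosh(\beta z)\big)\ \le\ M\,\exp\!\big(\epsilon\operatorname{Re}\cosh(\beta z)\big),
\]
and letting $\epsilon\to 0^{+}$ with $z$ fixed yields $|f(z)|\le M$, as claimed.

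The only genuine choice is the damping factor $\phi_{\epsilon}$, and I expect its construction to be the crux: one must use $\cosh(\beta z)$ — a double exponential in disguise — rather than the single exponential $e^{\beta z}$ that suffices for an angle (Theorem~\ref{angle}), and the condition $k<\beta<\frac{\pi}{2\gamma}$ is exactly what makes $\operatorname{Re}\cosh(\beta z)$ simultaneously nonnegative on the closed strip (so that $|\phi_{\epsilon}|\le 1$ on $\partial\Sigma$) and comparable to $e^{\beta|z|}$ (so that its decay overwhelms the prescribed growth $\exp(e^{k|z|})$). Everything else is routine bookkeeping, including the degenerate case $M=0$, which the $\eta\to 0$ limit above handles automatically.
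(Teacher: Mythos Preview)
Your proof is correct and follows the standard textbook argument (as in Conway or Levin): normalize to the horizontal strip, damp by $\exp(-\epsilon\cosh(\beta z))$ with $k<\beta<\frac{\pi}{2\gamma}$, apply the maximum principle (or Theorem~\ref{classiccomplex}) to the bounded damped function, and let $\epsilon\to 0^{+}$. All the estimates are carried out carefully and the degenerate cases are handled.

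There is nothing to compare against, however: the paper does not prove Theorem~\ref{strip}. It is stated in the introduction as a classical complex-variable result, with references to \cite{conway,levin} and \cite{ahlfors,heins}, and is then invoked (without reproof) to obtain the quaternionic analog in Section~\ref{slicewise} via the splitting lemma. Your argument is precisely the classical one those references contain.
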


In some recent papers \cite{jin1,jin2,kheyfits2,kheyfits} there has been a resurgence of interest in Phragm\'en-Lindel\"of type theorems. Specifically, \cite{jin1,jin2} consider solutions of suitable partial differential equations while \cite{kheyfits2,kheyfits} deal with the case of functions of a hypercomplex variable. In the present article we obtain the analogs of theorems \ref{classiccomplex}, \ref{angle} and \ref{strip} for slice regular functions, a class of functions of a quaternionic variable introduced in \cite{cras,advances} and studied in subsequent papers (for a survey, see \cite{survey}).

In section \ref{preliminaries} we will provide the necessary background about the theory of slice regular quaternionic functions. In section \ref{phragmen} we give direct proofs of quaternionic analogs of theorems  \ref{classiccomplex} and \ref{angle}. Finally, in section \ref{slicewise} we use a different approach, which exploits the intrinsic nature of slice regular functions, to extend our results.

\emph{Acknowledgements.} The first two authors are grateful to Chapman University for the hospitality during the preparation of this paper.

\section{Preliminaries}\label{preliminaries}

Let $\mathbb{H}$ denote the skew field of quaternions. Its elements are of the form $q=x_0+ix_1+jx_2+kx_3$ where the $x_l$ are real, and $i$, $j$, $k$ are such that
$$ i^2 = j^2 = k^2 = -1,$$
$$ ij=-ji=k,\ \  jk=-kj=i,\ \ ki=-ik=j.$$
We set
$$Re(q)=x_0,\qquad Im(q) =i x_1 +j x_2 +k x_3,\qquad |q|=\sqrt{x_0^2+x_1^2+x_2^2+x_3^2}.$$
$Re(q)$, $Im(q)$ and $|q|$ are called the \emph{real part}, the \emph{imaginary part} and the \emph{module} of $q$, respectively. The quaternion 
$$\bar q=Re(q)-Im(q)=x_0-i x_1-j x_2-k x_3$$
is called the {\em conjugate} of $q$ and satisfies
$$|q|=\sqrt{q\bar q}=\sqrt{\bar q q}.$$
The {\em inverse} of any element $ q\not= 0$ is given by
$$q^{-1}={{\bar q}\over{|q|^2}}.$$

We denote by $\mathbb{S}$ the unit sphere of purely imaginary quaternions, i.e.
$$\mathbb{S}=\{q=ix_1+jx_2+kx_3: x_1^2+x_2^2+x_3^2=1\}$$
so that every quaternion $q$ which is not real (i.e. with $Im(q) \neq 0$) can be written as $q = x+Iy$ for $x = Re(q), y=|Im(q)|$ and $I = \frac{Im(q)}{|Im(q)|} \in \s$. Also, if we set $r = |q|$ then $q = r e^{I \vartheta}$ for some $I \in \s$ and $\vartheta \in \rr$.

Beginning with the seminal papers of Fueter \cite{fueter1,fueter2}, many mathematicians have developed theories of holomorphicity in the quaternionic setting (for an overview, see the introduction of \cite{survey}). More recently, in \cite{cras, advances}, the authors proposed a new notion of holomorphicity (called \emph{slice regularity}) for quaternion-valued functions of a quaternionic variable. Unlike Fueter's, this theory includes the polynomials and the power series of the quaternionic variable $q$ of the type $\sum_{n \geq 0} q^n a_n$, with coefficients $a_n \in \hh$. Furthermore, analogs of most of the fundamental properties of holomorphic functions of one complex variable can be proven in this new setting (see also \cite{survey} and references therein).

\begin{definition}\label{regularity} 
Let $\Omega$ be an open set in $\mathbb{H}$. A function $f:\Omega \to \mathbb{H}$ is said to be \emph{slice regular} if, for every $I \in \mathbb{S}$, its restriction $f_I$ to the complex line $L_I=\mathbb{R}+\mathbb{R}I$ passing through the origin and containing $1$ and $I$ satisfies
$$\overline{\partial}_If(x+yI):=\frac{1}{2}\left(\frac{\partial}{\partial x}+I\frac{\partial}{\partial y}\right)f_I(x+yI)=0,$$
in $\Omega_I = \Omega \cap L_I$.
\end{definition}

The following result is a key tool in the study of slice regular functions, and it will be used extensively in section \ref{slicewise}.

\begin{lemma}[Splitting Lemma]\label{splitting}  
If $f$ is a slice regular function on an open set $\Omega$ then, for every $I \in \mathbb{S}$ and every $J \perp I$ in $\mathbb{S}$, there exist two holomorphic functions $F,G : \Omega_I \to L_I$ such that
$$f_I(z)=F(z)+G(z)J$$
for all $z\in \Omega_I$.
\end{lemma}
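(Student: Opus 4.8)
The plan is to reduce the statement to pure linear algebra in $\hh$ together with one elementary observation: once the target $\hh$ is decomposed along a well-chosen real basis, the single slice-regularity equation for $f$ on the slice $L_I$ splits into two independent copies of the Cauchy--Riemann equation.

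First I would fix $I \in \s$ and $J \in \s$ with $J \perp I$, and record the structural facts I need. From the multiplication table one checks that $\{1, I, J, IJ\}$ is an orthonormal real basis of $\hh$; that $L_I = \rr + \rr I$ is a subalgebra of $\hh$ isomorphic to $\cc$ (with $I$ playing the role of the imaginary unit), and hence closed under multiplication; and that $\hh = L_I \oplus L_I J$ as real vector spaces, where $L_I J$ denotes the real span of $J$ and $IJ$. Therefore, for each $z \in \Omega_I$ the value $f_I(z) \in \hh$ admits a \emph{unique} expression $f_I(z) = F(z) + G(z)J$ with $F(z), G(z) \in L_I$, and since (by Definition \ref{regularity}) $f_I$ is differentiable on $\Omega_I$, so are the coordinate functions $F, G : \Omega_I \to L_I$.

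Next I would substitute this decomposition into the defining equation. Writing $z = x + yI$ and using that left multiplication by $I$ sends $L_I$ into $L_I$, hence $L_I J$ into $L_I J$ (because $I(wJ) = (Iw)J$ for $w \in L_I$), one computes
$$2\,\overline{\partial}_I f(x+yI) = \left(\frac{\partial F}{\partial x} + I\frac{\partial F}{\partial y}\right) + \left(\frac{\partial G}{\partial x} + I\frac{\partial G}{\partial y}\right)J .$$
By hypothesis the left-hand side vanishes throughout $\Omega_I$; since the first summand lies in $L_I$ and the second in the complementary subspace $L_I J$, each summand must vanish identically. Thus $F$ and $G$ both satisfy $\left(\tfrac{\partial}{\partial x} + I\tfrac{\partial}{\partial y}\right)(\cdot) = 0$ on $\Omega_I$, which under the identification $L_I \cong \cc$ is precisely the assertion that $F$ and $G$ are holomorphic on $\Omega_I$.

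As for difficulties: there is no genuine obstacle here — the whole argument is bookkeeping inside $\hh$. The only points deserving a line of care are the verification that $\{1, I, J, IJ\}$ is an $\rr$-basis and that the two summands above land in complementary subspaces (so that a vanishing sum forces both terms to vanish), and — depending on exactly how much smoothness is built into the notion of slice regularity — the remark that the differentiability of $f_I$ is inherited by the components $F$ and $G$, so that the partial derivatives written above are meaningful.
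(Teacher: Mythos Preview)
Your argument is correct and is precisely the standard proof of the Splitting Lemma. Note, however, that the paper does not actually supply a proof of this statement: it is quoted as a known tool from the foundational references \cite{cras,advances}, so there is no ``paper's own proof'' to compare against. What you wrote is essentially the original argument from those sources --- decompose $\hh = L_I \oplus L_I J$, project $f_I$ onto the two $L_I$-components, and observe that $\overline{\partial}_I$ preserves this splitting so that the single equation $\overline{\partial}_I f_I = 0$ separates into two copies of the Cauchy--Riemann equations.
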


We now identify a class of domains that naturally qualify as domains of definition of regular functions.

\begin{definition}
Let $\Omega$ be a domain in $\mathbb{H}$. We say that $\Omega$ is a \emph{slice domain} if $\Omega \cap \mathbb{R}$ is non empty and if $\Omega_I = \Omega \cap L_I$ is a domain in $L_I$ for all $I \in \mathbb{S}$.
\end{definition}

Indeed, an analog of the identity principle holds for slice regular functions on slice domains.

\begin{theorem}\label{identity}
Let $f,g:\Omega\to \mathbb{H}$ be slice regular functions on a slice domain $\Omega$. If $f$ and $g$ coincide in $T \subseteq \Omega$ and if there exists $I \in \s$ such that $T_I = T\cap L_I$ has an accumulation point in $\Omega_I$, then $f$ and $g$ coincide in $\Omega$.
\end{theorem}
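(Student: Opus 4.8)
The plan is to reduce everything to the classical identity principle for holomorphic functions of one complex variable, using the Splitting Lemma on each slice and the real axis as a bridge between different slices.

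First I would set $h = f - g$, which is slice regular on $\Omega$ and vanishes identically on $T$; it suffices to show $h \equiv 0$. Fix the $I \in \s$ provided by the hypothesis and pick $J \in \s$ with $J \perp I$. By the Splitting Lemma there are holomorphic functions $F, G : \Omega_I \to L_I$ with $h_I(z) = F(z) + G(z)J$ for all $z \in \Omega_I$. Since $L_I$ and $L_I J$ meet only in $0$ (because $J \perp I$), the vanishing of $h_I$ on $T_I$ forces both $F$ and $G$ to vanish on $T_I$. Now $\Omega_I$ is a domain in $L_I \cong \cc$ and $T_I$ has an accumulation point in it, so the classical identity principle gives $F \equiv 0$ and $G \equiv 0$, hence $h_I \equiv 0$ on $\Omega_I$.

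Next I would propagate this to the whole of $\Omega$. Since $\Omega$ is a slice domain, $\Omega \cap \rr$ is a nonempty open subset of $\rr$; as $\Omega \cap \rr \subseteq L_I$, we get $h \equiv 0$ on $\Omega \cap \rr$. Now take an arbitrary $K \in \s$ and $J_K \perp K$. The set $\Omega_K = \Omega \cap L_K$ is a domain containing $\Omega \cap \rr$, and the latter, being a nonempty open subset of the real line, has accumulation points in $\Omega_K$. Splitting $h_K = F_K + G_K J_K$ and arguing exactly as above, both components vanish on $\Omega \cap \rr$, so by the classical identity principle $h_K \equiv 0$ on $\Omega_K$. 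As $K$ ranges over $\s$ we have $\Omega = \bigcup_{K \in \s} \Omega_K$, whence $h \equiv 0$ on $\Omega$, i.e. $f \equiv g$.

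I expect the main obstacle to be the passage from a single slice to all of $\Omega$: this is precisely where the slice-domain hypothesis is indispensable, since it ensures both that $\Omega$ meets the real axis and that each $\Omega_K$ is connected, so that the one-variable identity principle can be run slice by slice with the common real interval $\Omega \cap \rr$ playing the role of the coincidence set. Checking that the Splitting Lemma components genuinely inherit the vanishing (rather than merely $F + GJ$ vanishing) is a routine linear-independence observation, but it should be stated carefully.
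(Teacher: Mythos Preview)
Your argument is correct. Note, however, that in the paper this theorem is stated in the preliminaries as a known result and is not accompanied by a proof; it is quoted from the foundational papers on slice regular functions. The proof you outline---Splitting Lemma on the distinguished slice $L_I$, classical identity principle for the two holomorphic components, propagation through the real axis to every other slice $L_K$ using that $\Omega\cap\rr$ is nonempty and each $\Omega_K$ is connected---is precisely the standard one given in those references, so there is nothing to compare against within this paper itself.
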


Furthermore, analogs of the two classic statements of the maximum modulus principle hold.

\begin{theorem}\label{weakmaximum} 
Let $f:\Omega\to \mathbb{H}$ be a slice regular function on a slice domain $\Omega$. If $|f|$ has a relative maximum point in $\Omega$, then $f$ is constant in $\Omega$.
\end{theorem}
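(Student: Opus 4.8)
The plan is to localize the problem to a single complex slice $L_{I_0}$, invoke the classical (strong) maximum principle there, and then spread constancy to all of $\Omega$ by the identity principle, Theorem~\ref{identity}. Suppose $|f|$ has a relative maximum at $p\in\Omega$, and choose $I_0\in\s$ with $p\in L_{I_0}$ (if $p=x_0+y_0I$ with $y_0\ne 0$ take $I_0=I$; if $p\in\rr$ take any $I_0\in\s$). Pick $J\in\s$ with $J\perp I_0$ and use the Splitting Lemma to write $f_{I_0}=F+GJ$ on $\Omega_{I_0}$ with $F,G:\Omega_{I_0}\to L_{I_0}$ holomorphic. Since $F(z)\in L_{I_0}=\rr+\rr I_0$ while $G(z)J$ lies in the orthogonal complement of $L_{I_0}$ in $\hh\cong\rr^4$, one gets
$$|f_{I_0}(z)|^2=|F(z)|^2+|G(z)|^2\qquad(z\in\Omega_{I_0}).$$

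Identifying $L_{I_0}$ with $\cc$, each of $|F|^2$, $|G|^2$ is subharmonic on $\Omega_{I_0}$, hence so is $u:=|F|^2+|G|^2=|f_{I_0}|^2$. As the restriction of $|f|$, the function $u$ has a local maximum at $p$, so on a small disc $B\subset\Omega_{I_0}$ about $p$ it attains its supremum at the interior point $p$; the strong maximum principle for subharmonic functions then forces $u$ to be constant on $B$. Since $\Delta u=4(|F'|^2+|G'|^2)\equiv 0$ on $B$, we get $F'\equiv G'\equiv 0$, so $F$ and $G$ are constant on $B$. Because $\Omega$ is a slice domain, $\Omega_{I_0}$ is connected, so the identity principle for holomorphic functions makes $F$ and $G$, hence $f_{I_0}$, constant on all of $\Omega_{I_0}$. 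Letting $g:\Omega\to\hh$ be the corresponding constant function, $f$ and $g$ agree on $T:=\Omega_{I_0}$, and $T_{I_0}=\Omega_{I_0}$ is open, hence has accumulation points in $\Omega_{I_0}$; Theorem~\ref{identity} (with index $I_0$) then yields $f\equiv g$ on $\Omega$.

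I expect the only genuinely delicate points to be the orthogonality bookkeeping that makes $|f_{I_0}|^2$ split as $|F|^2+|G|^2$, and the observation that the passage from ``constant near $p$'' to ``constant on the whole slice'' requires $\Omega_{I_0}$ to be connected — which is exactly the slice-domain hypothesis. This is also the conceptual heart of the matter: without assuming $\Omega$ is a slice domain, $\Omega_{I_0}$ could be disconnected (or miss the real axis), the argument would only give constancy on one component of one slice, and the identity principle would be powerless to propagate it, so the hypothesis cannot be dropped. (One could equally well replace the subharmonicity step by viewing $(F,G)$ as a holomorphic curve in $\cc^2$ and applying the vector-valued maximum modulus principle, but the scalar subharmonic argument keeps everything within one-variable theory.)
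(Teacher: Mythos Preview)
The paper does not actually prove Theorem~\ref{weakmaximum}: it is listed in Section~\ref{preliminaries} as a known background result, with a reference to the earlier literature on slice regular functions (cf.\ \cite{advances,survey}). So there is no ``paper's own proof'' to compare against.

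That said, your argument is correct and is essentially the standard proof in the literature: restrict to a slice $L_{I_0}$ through the maximum point, split $f_{I_0}=F+GJ$ via Lemma~\ref{splitting}, use the orthogonality of $\{1,I_0,J,I_0J\}$ to obtain $|f_{I_0}|^2=|F|^2+|G|^2$, apply the strong maximum principle to this subharmonic function to force $F'\equiv G'\equiv 0$ near $p$, extend to all of $\Omega_{I_0}$ by connectedness, and then propagate to $\Omega$ via Theorem~\ref{identity}. All the steps are sound, including the Laplacian computation $\Delta|F|^2=4|F'|^2$ and the use of the slice-domain hypothesis to guarantee that $\Omega_{I_0}$ is connected. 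Your closing commentary on why the slice-domain hypothesis is essential is accurate and worth keeping as a remark, though it is not part of the proof proper.
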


\begin{corollary}\label{maximum}
Let $f:\Omega\to \mathbb{H}$ be a slice regular function on a bounded slice domain $\Omega$. If $\limsup_{q \to q_0} |f(q)|\leq M$ for all $q_0 \in \partial \Omega$ then $|f|\leq M$ in $\Omega$.
\end{corollary}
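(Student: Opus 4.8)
The plan is to deduce this directly from the weak maximum modulus principle, Theorem~\ref{weakmaximum}, by a routine compactness argument, exactly as in the classical complex case. First I would set $S = \sup_{q \in \Omega} |f(q)| \in [0,+\infty]$ and choose a maximizing sequence $\{q_n\} \subset \Omega$ with $|f(q_n)| \to S$. Since $\Omega$ is bounded, $\overline{\Omega}$ is compact, so after passing to a subsequence we may assume $q_n \to p$ for some $p \in \overline{\Omega}$.

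Next I would split according to whether $p$ lies on the boundary or in the interior of $\Omega$. If $p \in \partial \Omega$, then the hypothesis gives immediately $S = \lim_n |f(q_n)| \leq \limsup_{q \to p,\ q \in \Omega} |f(q)| \leq M$. If instead $p \in \Omega$, then, using that a slice regular function is continuous on its domain of definition (which follows, e.g., from the Splitting Lemma~\ref{splitting} together with the structure of slice regular functions), we get $|f(p)| = \lim_n |f(q_n)| = S$; in particular $S < +\infty$ and $|f|$ attains a global — hence relative — maximum at the interior point $p$. Theorem~\ref{weakmaximum} then forces $f$ to be constant on $\Omega$, say $f \equiv c$ with $|c| = S$. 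Since $\Omega$ is a nonempty bounded open subset of $\hh$, its boundary is nonempty; choosing any $q_0 \in \partial \Omega$, the hypothesis yields $S = |c| = \limsup_{q \to q_0} |f(q)| \leq M$. In either case $S \leq M$, i.e. $|f| \leq M$ throughout $\Omega$, as claimed.

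I do not expect a genuine obstacle here: the statement is a direct corollary, and the only points deserving a word of justification are that $\partial\Omega \neq \emptyset$ (automatic, since $\Omega$ is nonempty, bounded and open) and that $f$ is continuous on $\Omega$, which is what lets one upgrade the maximizing sequence to an actual interior maximum point before invoking Theorem~\ref{weakmaximum}. If one wished to bypass any appeal to joint continuity of $f$, the interior case could instead be handled slice-wise, by restricting to a complex line $L_I$ through $p$ and applying the classical maximum modulus theorem to the holomorphic components $F,G$ of $f_I$ given by Lemma~\ref{splitting}; but the direct route above is cleaner, and note that the slice domain hypothesis on $\Omega$ is used only through Theorem~\ref{weakmaximum}.
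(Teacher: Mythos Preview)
Your argument is correct and is the standard compactness derivation of the strong maximum principle from the weak one (Theorem~\ref{weakmaximum}); the side remarks on continuity of $f$ and nonemptiness of $\partial\Omega$ are the right checks. Note, however, that the paper does not actually prove Corollary~\ref{maximum}: it is listed in Section~\ref{preliminaries} as a known preliminary result, stated without proof alongside Theorem~\ref{weakmaximum}. So there is no ``paper's own proof'' to compare against; your write-up simply supplies the routine justification that the authors took for granted.
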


Due to the non-commutativity of $\hh$, pointwise multiplication and composition do not preserve slice regularity in general. Nevertheless, slice regularity is preserved for the following class of functions.

\begin{definition}
Let $f:\Omega\to \mathbb{H}$ be a slice regular function. We say that $f$ is a \emph{slice preserving function} if $f(\Omega_I) \subseteq L_I$ for all $I \in \mathbb{S}$.
\end{definition}

\begin{proposition}\label{multformula}
Let $f,g:\Omega\to \mathbb{H}$ be slice regular functions. If $f$ is a slice preserving function then the product $f\cdot g$ is slice regular.
\end{proposition}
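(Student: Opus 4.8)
The plan is to verify the defining condition $\overline{\partial}_I(f\cdot g)_I=0$ on every complex slice $L_I$, reducing matters to the elementary fact that a product of two holomorphic functions of one complex variable is holomorphic. The Splitting Lemma is the bridge that makes this reduction work.

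First I would fix $I\in\s$ and choose $J\in\s$ with $J\perp I$, identifying $L_I$ with $\cc$ so that ``holomorphic'' has its usual meaning. Since $f$ is slice preserving we have $f(\Omega_I)\subseteq L_I$, so $f_I$ is an $L_I$-valued map; combined with $\overline{\partial}_I f_I=0$ this says exactly that $f_I:\Omega_I\to L_I$ is holomorphic. (Equivalently, in the splitting $f_I=F_0+G_0J$ provided by Lemma \ref{splitting} the component $G_0$ vanishes identically, because $f_I$ has no $J$-part.) Applying the Splitting Lemma to $g$, I obtain holomorphic functions $F,G:\Omega_I\to L_I$ with $g_I=F+GJ$.

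Next I would compute the restriction of the pointwise product. For $z\in\Omega_I$, by left distributivity, $(f\cdot g)_I(z)=f_I(z)\bigl(F(z)+G(z)J\bigr)=f_I(z)F(z)+\bigl(f_I(z)G(z)\bigr)J$. Here $f_IF$ and $f_IG$ are products of two holomorphic maps into the commutative field $L_I\cong\cc$, hence are themselves holomorphic maps $\Omega_I\to L_I$. Thus $(f\cdot g)_I=(f_IF)+(f_IG)J$ is a decomposition with holomorphic components, and since $J$ is a constant and $\overline{\partial}_I(\,\cdot\,J)=(\overline{\partial}_I\,\cdot\,)J$, we get $\overline{\partial}_I(f\cdot g)_I=\overline{\partial}_I(f_IF)+\bigl(\overline{\partial}_I(f_IG)\bigr)J=0$. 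As $I\in\s$ was arbitrary, $f\cdot g$ is slice regular. (The converse of the Splitting Lemma used in this last line — that $f_I=\tilde F+\tilde GJ$ with $\tilde F,\tilde G$ holomorphic forces $\overline{\partial}_I f_I=0$ — is immediate from the definition of $\overline{\partial}_I$.)

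The argument is short, so there is no single ``hard part''; the one point to be careful about is precisely where the slice preserving hypothesis enters. It is exactly what guarantees that $f_I$ takes values in $L_I$, so that the factors $f_IF$ and $f_IG$ remain in $L_I$ and stay holomorphic. Without this hypothesis — or, for instance, for the reversed product $g\cdot f$ — one would instead encounter terms such as $Jf_I(z)=\overline{f_I(z)}\,J$, injecting an anti-holomorphic factor that destroys regularity; so the slice preserving factor must sit on the correct (left) side. An alternative proof via power series expansions (slice preserving functions on a ball have real coefficients, which is compatible with the $*$-product reducing to the pointwise one) is possible but less transparent and less general than the slice-by-slice argument above.
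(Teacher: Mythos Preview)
Your argument is correct. The paper itself does not prove this proposition: it is listed in Section~\ref{preliminaries} as background material, stated without proof alongside the other preliminary facts about slice regular functions. So there is no ``paper's own proof'' to compare against.

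Your slice-by-slice verification via the Splitting Lemma is the natural and standard route: the slice preserving hypothesis forces $f_I$ to be $L_I$-valued and holomorphic, so $(f\cdot g)_I=f_I F+(f_I G)J$ has holomorphic components and $\overline{\partial}_I$ kills it. Your remark about why the hypothesis is needed (and why the order of the factors matters) is also accurate.
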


\begin{proposition}\label{composition}
Let $f:\Omega\to \Omega' \subseteq \mathbb{H}$ and $g : \Omega' \to \hh$ be slice regular functions. If $f$ is a slice preserving function then the composition $g \circ f$ is slice regular.
\end{proposition}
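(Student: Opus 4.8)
\textbf{Proof proposal for Proposition \ref{composition}.}

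The plan is to verify the slice regularity of $g \circ f$ by checking Definition \ref{regularity} directly on each complex line, exploiting the fact that $f$ being slice preserving means each slice $f_I$ maps into $L_I$, so the composition can be read entirely inside a single copy of $\cc$. First I would fix $I \in \s$ and observe that, since $f(\Omega_I) \subseteq L_I$, the restriction $(g \circ f)_I$ equals $g_I \circ f_I$, where $f_I : \Omega_I \to L_I$ and $g_I : \Omega'_I \to \hh$ are the restrictions of $f$ and $g$ to the complex line $L_I$ (respectively to $\Omega_I$ and $\Omega'_I = \Omega' \cap L_I$). Thus the problem reduces, on each slice, to a statement about ordinary complex-analytic objects: $f_I$ is holomorphic as a map $\Omega_I \to L_I$ by slice regularity of $f$, and I want $g_I \circ f_I$ to be holomorphic as a map $\Omega_I \to \hh$.

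The key step is then to handle the target $\hh$ via the Splitting Lemma. Choose $J \perp I$ in $\s$ and write $g_I = G_1 + G_2 J$ with $G_1, G_2 : \Omega'_I \to L_I$ holomorphic. Since $f_I$ takes values in $L_I \cong \cc$, the compositions $G_1 \circ f_I$ and $G_2 \circ f_I$ are genuine compositions of holomorphic functions of one complex variable, hence holomorphic $\Omega_I \to L_I$; consequently $(g\circ f)_I = (G_1 \circ f_I) + (G_2 \circ f_I) J$ satisfies $\overline{\partial}_I (g \circ f)_I = 0$ on $\Omega_I$ by linearity of $\overline{\partial}_I$ over $L_I$ together with the fact that right-multiplication by the constant $J$ commutes with $\overline{\partial}_I$ (because $\overline{\partial}_I$ only differentiates, and $I$ acts on the left). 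Since $I \in \s$ was arbitrary, this shows $g \circ f$ is slice regular on $\Omega$.

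The main obstacle — and the point that genuinely uses the slice-preserving hypothesis — is the identity $(g\circ f)_I = g_I \circ f_I$: without $f(\Omega_I) \subseteq L_I$, the value $f(z)$ for $z \in \Omega_I$ would generally leave $L_I$, so the behaviour of $g$ off the line $L_I$ would be relevant and one could not restrict $g$ to a single slice. I would make sure to state explicitly that $f$ slice preserving guarantees $f(\Omega_I) \subseteq \Omega' \cap L_I = \Omega'_I$, so that $g_I \circ f_I$ is well defined with the correct domain. One minor technical check worth mentioning is that the chain rule in the form "holomorphic composed with holomorphic is holomorphic" is being applied to maps with values in $L_I$, which is just $\cc$ after identification, so no quaternionic subtlety arises there; all the non-commutativity is absorbed into the constant $J$, which the derivation handles cleanly.
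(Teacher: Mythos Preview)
Your argument is correct. The paper itself states Proposition \ref{composition} without proof, treating it as a known preliminary fact from the literature on slice regular functions; so there is no proof in the paper to compare against. Your approach --- reducing to a single slice via the slice-preserving hypothesis, then applying the Splitting Lemma \ref{splitting} to write $g_I = G_1 + G_2 J$ and using ordinary complex composition --- is the standard and natural one, and all the steps are sound. In particular you correctly isolate the crucial point: the identity $(g\circ f)_I = g_I \circ f_I$ requires $f(\Omega_I)\subseteq \Omega'_I$, which is exactly what slice preservation provides, and without it the argument would collapse because $g$ evaluated off $L_I$ need not split with respect to the same $I$.
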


\section{The Phragm\'en - Lindel\"of principle}\label{phragmen}

In this section we will give a direct proof of the Phragm\'en-Lindel\"of principle for slice regular functions defined on suitable domains $\Omega$ in the quaternionic space $\hh$. We will also study the special case in which $\Omega$ is a cone.

As in the complex case, the quaternionic Phragm\'en-Lindel\"of principle generalizes the maximum modulus principle \ref{maximum} to unbounded domains. Let $\widehat{\hh} = \hh \cup \{\infty\}$ denote the Alexandroff compactification of $\hh$. We define the \emph{extended boundary} $\partial_\infty \Omega$ of any $\Omega \subseteq \widehat{\hh}$ to be the boundary of the closure of $\Omega$ in $\widehat{\hh}$. As customary, we will denote by $\partial \Omega= \partial_{\infty} \Omega \setminus \{\infty\}$ the finite boundary of $\Omega$.

\begin{theorem}[Phragm\'en-Lindel\"of principle]\label{classic}
Let $\Omega \subset \hh$ be a domain whose extended boundary contains the point at infinity and suppose that there exist a real point $t \in \rr \cap \Omega$ such that $\Omega \setminus (-\infty, t]$ (or $\Omega \setminus [t, +\infty)$) is a slice domain. If $f$ is a bounded slice regular function on $\Omega$ and $\limsup_{q \to q_0}|f(q)| \leq M$ for all $q_0 \in \partial \Omega$, then $|f(q)| \leq M$ for all $q \in \Omega$.
\end{theorem}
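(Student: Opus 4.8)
The plan is to reduce the quaternionic statement to the classical complex Phragm\'en-Lindel\"of theorem (Theorem \ref{classiccomplex}) applied slicewise. Fix an imaginary unit $I \in \s$ and consider the restriction $f_I : \Omega_I \to \hh$. By the Splitting Lemma \ref{splitting}, choosing $J \perp I$ we may write $f_I = F + GJ$ with $F, G : \Omega_I \to L_I$ holomorphic. Since $f$ is bounded, so are $F$ and $G$, and at every finite boundary point of $\Omega_I$ the bound $\limsup |f| \le M$ controls $|F|$ and $|G|$ (because $|f_I|^2 = |F|^2 + |G|^2$ on $L_I$). Identifying $L_I$ with $\cc$, one would like to invoke Theorem \ref{classiccomplex} on each component $F$ and $G$ to conclude $|F|, |G| \le M$ on $\Omega_I$, hence $|f| \le M$ there; since $I$ was arbitrary and $\Omega = \bigcup_I \Omega_I$, this would finish the proof.

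The main obstacle is that Theorem \ref{classiccomplex} requires the planar domain to be \emph{simply connected}, whereas $\Omega_I$ need not be: the hypothesis only guarantees that $\Omega$ (minus a real half-line) is a slice domain, so each $\Omega_I$ is a domain, but slicing a domain in $\hh$ by a complex line can easily produce a region with holes or one that is not simply connected. This is exactly why the theorem carries the technical assumption that $\Omega \setminus (-\infty, t]$ (or $\Omega \setminus [t,+\infty)$) be a slice domain. The key step will be to show that removing such a real half-line from $\Omega$ does not disturb the conclusion while forcing the relevant slices to be simply connected. Concretely, I would argue that for each $I$, the set $\Omega_I \setminus (-\infty, t]$ (viewed inside $L_I \cong \cc$ with the half-line lying on the real axis) is simply connected: a domain in $\cc$ whose complement in $\widehat{\cc}$ is connected is simply connected, and cutting along a half-line from $t$ to $\infty$ through the real boundary point $t$ and the point at infinity (both of which lie in $\partial_\infty \Omega$) connects the complement. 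Here the assumption that $\infty \in \partial_\infty\Omega$ and $t \in \rr \cap \Omega$ is used crucially so that the cut genuinely reaches the extended boundary.

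Having established simple connectivity of $\Omega_I' := \Omega_I \setminus (-\infty,t]$, I would apply Theorem \ref{classiccomplex} to $F|_{\Omega_I'}$ and $G|_{\Omega_I'}$. Its boundary hypothesis is met because $\partial \Omega_I' \subseteq \partial \Omega_I \cup (-\infty, t]$; on $\partial \Omega_I$ we have the given bound by $M$, and on the cut $(-\infty,t]$ the functions $F, G$ are still defined (the cut is interior to $\Omega$), so by continuity/the maximum modulus corollary \ref{maximum} applied on a bounded exhaustion, or simply by noting $f$ is already known bounded by $M$ on a neighborhood of $\rr \cap \Omega$ via the one-dimensional statement restricted to $L_{\mathbb{R}}$... more carefully, one observes that the real slice behaves specially and the bound propagates. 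The cleanest route is: first prove $|f| \le M$ on $\Omega \cap \rr$ directly (a real interval, where $f$ restricted is a bounded holomorphic function on an interval of $\rr \subset \cc$ —here one uses that $t \in \Omega$ and a limiting argument), then the cut $(-\infty,t]\cap\Omega_I$ inherits the bound $M$, giving the boundary hypothesis of Theorem \ref{classiccomplex} on $\Omega_I'$. Conclude $|F|, |G| \le M$ on $\Omega_I'$, hence $|f| \le M$ on $\Omega_I'$; finally extend to all of $\Omega_I$ using the identity principle \ref{identity} together with boundedness, or observe $\Omega_I = \overline{\Omega_I'}\cap \Omega_I$ up to the cut and use continuity of $|f|$. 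Ranging over all $I \in \s$ yields $|f| \le M$ on $\Omega$.
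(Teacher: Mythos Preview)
The paper's proof of this theorem is \emph{direct}, not slicewise: it builds an auxiliary slice-preserving regular function $\omega_r^\delta(q)=\exp\big(\delta\,Log(q^{-1}r)\big)$ on $\Omega'=\Omega\setminus(\overline{B_r}\cup(-\infty,r])$, applies the quaternionic maximum modulus principle (Corollary~\ref{maximum}) to $\omega_r^\delta f$, and then lets $\delta\to0^+$ and $r\to0^+$. The half-line is removed so that the principal quaternionic logarithm is well defined and slice regular, and a separate argument is needed to control $|\omega_r^\delta f|$ on the artificial boundary along the cut. The slicewise reduction you propose is exactly what the paper carries out later, as Theorem~\ref{classicslice}, but under the \emph{different} hypothesis that every $\Omega_I$ is simply connected.

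Your argument has a genuine gap: the hypotheses of Theorem~\ref{classic} do \emph{not} force $\Omega_I\setminus(-\infty,t]$ to be simply connected. Take $\Omega=\{q\in\hh:Re(q)>0\}\setminus\overline{B(5+10i,1)}$ and $t=1$. This $\Omega$ is an unbounded domain with $\infty\in\partial_\infty\Omega$, and one checks that $\Omega\setminus(-\infty,1]$ is a slice domain. Yet $\Omega_i\setminus(-\infty,1]$ is the right half-plane in $L_i$ with the closed disk $\overline{D(5+10i,1)}$ and the segment $(0,1]$ removed; its complement in $\widehat{L_i}$ has $\overline{D(5+10i,1)}$ as an isolated component, so this slice is not simply connected. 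Your cut along the real axis only joins those complementary components that already meet $\rr\cup\{\infty\}$.

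There is a second gap even on slices where simple connectivity does hold: to apply Theorem~\ref{classiccomplex} on $\Omega_I'=\Omega_I\setminus(-\infty,t]$ you must already know $\limsup|F|\le M$ along the cut, which lies in the interior of $\Omega$. Your proposal to ``prove $|f|\le M$ on $\Omega\cap\rr$ directly'' is circular: there is no maximum principle on a one-dimensional real interval, and invoking holomorphicity on some $L_I\supset\rr$ is precisely the statement you are trying to establish. (A minor further point: from $|F|,|G|\le M$ one only gets $|f_I|\le\sqrt{2}\,M$; the paper's proof of Theorem~\ref{classicslice} avoids this by normalizing so that $f(p)>0$, whence $f_I(p)=F(p)$.)
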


\begin{proof}
Since $q \mapsto q+t$ and $q \mapsto -q$ are slice preserving functions, by proposition \ref{composition} we can assume that $t = 0$ and  $\Omega \setminus (-\infty, 0]$ is a slice domain. Choose $r>0$ such that the closure of $B_r = B(0,r)$ is contained in $\Omega$ and let $\omega_r(q) = q^{-1}r$ for $q \neq 0$. Notice that $|\omega_r|<1$ in $\hh \setminus \overline{B_r}$, that $|\omega_r| = 1$ on $\partial B_r$, and that $\omega_r$ is a slice preserving regular function.

For all $q \in \hh \setminus (-\infty,0]$, define the principal logarithm of $q$ as 
$$Log(q) =  \ln |q| + \arccos\left(\frac{Re(q)}{|q|}\right) \frac{Im(q)}{|Im(q)|}.$$ 
Notice that the principal logarithm is a slice regular function and it is slice preserving. By proposition \ref{composition}, setting $\omega_r^\delta(q) := e^{\delta Log \omega(q)}$  for all $q \in \hh \setminus (-\infty,0]$ defines a slice regular function. Finally, by proposition \ref{multformula} the product $\omega_r^\delta f$ is a slice regular function on $\Omega' = \Omega \setminus (\overline{B_r} \cup (-\infty,r])$, which by hypothesis is a slice domain when $r$ is sufficiently small. The behavior of $|\omega_r^{\delta}f|$ on the extended boundary $\partial_\infty \Omega' = \{\infty\} \cup \partial \Omega \cup \partial B_r \cup (\Omega \cap (-\infty,r])$ is the following:
\begin{enumerate}
\item $\limsup_{q \to \infty} |\omega_r^{\delta}f(q)| = \limsup_{q \to \infty}|f(q)| \frac{r^\delta}{|q|^\delta} = 0,$
\item $\limsup_{q \to q_0} |\omega_r^{\delta}f(q)| 
< \limsup_{q \to q_0}|f(q)| \leq M$ for all $q_0 \in \partial \Omega,$
\item $\limsup_{q \to q_0} |\omega_r^{\delta}f(q)| 
= |f(q_0)| \leq \max_{\partial B_r}|f|=: M_r$ for all $q_0 \in \partial B_r,$
\item $\limsup_{q \to q_0} |\omega_r^{\delta}f(q)| \leq \sup_{\Omega \cap (-\infty,r]} \frac{r^\delta}{|q|^\delta}|f(q)| =: N$  for all $q_0 \in \Omega \cap (-\infty,r].$
\end{enumerate}

Let us prove that $N$ is finite. Choose $\{a_n\}_{n \in \nn} \subset \overline{\Omega} \cap (-\infty,r]$ such that $\lim_{n \to \infty}\frac{r^\delta}{|a_n|^\delta}|f(a_n)| = N$. If $N = 0$, there is nothing to prove. Otherwise,  by point 1, $\{a_n\}_{n \in \nn}$ must be bounded. By possibly extracting a subsequence, we may suppose $\{a_n\}_{n \in \nn}$ to converge to some $q_0 \in \overline{\Omega} \cap (-\infty,r]$. If $q_0 \in \partial \Omega$ then $N \leq M$ by hypothesis. Else $q_0 \in \Omega$ and $N = \frac{r^\delta}{|q_0|^\delta}|f(q_0)|$.

As a consequence of points 1-4, $\limsup_{q \to q_0}|\omega_r^\delta f| \leq \max\{M, M_r, N\}$ for all $q_0 \in \partial_\infty \Omega'$ and, by an easy application of the maximum modulus principle \ref{maximum}, $|\omega_r^\delta f| \leq \max\{M, M_r, N\}$ in $\Omega'$.

Now let us prove that $N \leq \max\{M, M_r\}$. Suppose by contradiction that the opposite inequality holds. In particular $N>M$ and (as we explained above) there exists $q_0 \in \Omega\cap (-\infty, r])$ such that $N = \frac{r^\delta}{|q_0|^\delta}|f(q_0)|$. In a ball $B(q_0,\varepsilon)$ contained in $\Omega \setminus \overline{B_r}$, we define a new branch of logarithm $log$ by letting 
$$log (q) = \ln |q| + \left[\arccos\left(\frac{Re(q)}{|q|}\right) - \pi\right] \frac{Im(q)}{|Im(q)|}.$$ 
As before, the function $g = e^{\delta log \omega_r} f$ is slice regular in $B(q_0,\varepsilon)$ and $|g(q)| = \frac{r^\delta}{|q|^\delta}  |f(q)|$ for all $q \in B(q_0,\varepsilon)$. As a consequence, $|g(q_0)| \geq |g(q)|$ for all $q \in B(q_0, \varepsilon)$. Indeed:
\begin{enumerate}
\item for all $q \in (q_0-\varepsilon, q_0 +\varepsilon)$, $|g(q)| \leq \sup_{\Omega \cap (-\infty,r]} \frac{r^\delta}{|q|^\delta}|f(q)| = N = |g(q_0)|$; 
\item for all $q \in B(q_0,\varepsilon) \setminus (q_0-\varepsilon, q_0 +\varepsilon)$, we proved $|g(q)| = |\omega_r^\delta(q) f(q)| = \frac{r^\delta}{|q|^\delta} |f(q)| \leq \max\{M, M_r, N\} = N=|g(q_0)|$.
\end{enumerate}
Hence $|g|$ has a maximum at $q_0$ and $g$ must be constant. Therefore $|\omega_r^\delta f| =|g|\equiv N$ in $B(q_0,\varepsilon) \setminus (q_0-\varepsilon, q_0 +\varepsilon)$. In particular $\omega_r^\delta f$, which is a slice regular function on the slice domain $\Omega'$, has an interior maximum point. As before, the maximum modulus principle \ref{maximum} yields that $\omega_r^\delta f$ must be constant. As a consequence, there exists a constant $c$ such that $f(q)=q^\delta c$ in $\Omega'$, a contradiction with the hypothesis that $f$ is bounded.

So far, we proved that $|\omega_r|^{\delta} |f| \leq \max\{M, M_r\}$
in $\Omega\setminus \overline{B_r}$. We deduce that 
$$|f| \leq \frac{\max\{M, M_r\}}{|\omega|^\delta}$$ 
in $\Omega\setminus \overline{B_r}$ and letting $\delta \to 0^+$ we conclude that $|f|\leq \max\{M, M_r\}$ in $\Omega \setminus \overline{B_r}$.

If we let $r \to 0^+$ we obtain $|f(q)| \leq \max \{M, |f(0)|\}$ for all $q \in \Omega \setminus \{0\}$, hence for all $q \in \Omega$. Finally, we prove that $|f(0)| \leq M$: if it were not so, then $|f|$ would have a maximum at $0$, a contradiction by the maximum modulus principle \ref{weakmaximum}.
 \end{proof}
 
We now tackle the case in which $\Omega$ is the \emph{circular cone} 
$$C\left(\varphi\right) = \{re^{I \vartheta} : r>0, |\vartheta| < \varphi/2, I \in \s\}$$ 
for some $\varphi < 2 \pi$. Such cones certainly satisfy the hypotheses of theorem \ref{classic}. Moreover, we can prove that it is not necessary to suppose that $f$ is bounded as long as the \emph{opening} $\varphi$ of the cone is suitably related to the growth order of $f$, defined as follows. If $f$ is a slice regular function on $\Omega = C\left(\varphi\right)$, continuous up to the boundary, we set
\begin{equation}\label{modulus}
M_f(r, \Omega) = \max \{|f(q)| : q \in \overline{\Omega}, |q| = r\}
\end{equation}
and define the \emph{order} $\rho$ of $f$ as
\begin{equation}\label{order}
\rho = \limsup_{r \to +\infty} \frac{\ln^+ \ln^+ M_f(r, \Omega)}{\ln r}.
\end{equation}

\begin{theorem}[Phragm\'en-Lindel\"of principle for circular cones]\label{cones}
Let $f$ be a slice regular function in $C\left(\frac{\pi}{\alpha}\right)$, continuous up to the boundary. Suppose the order $\rho$ of $f$ to be strictly less than $\alpha$. If there exists an $M\geq 0$ such that $|f|\leq M$ in $\partial C\left(\frac{\pi}{\alpha}\right)$ then $|f|\leq M$ in $C\left(\frac{\pi}{\alpha}\right)$. 
\end{theorem}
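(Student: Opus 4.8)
The plan is to reduce the quaternionic statement to the classical complex Phragm\'en--Lindel\"of theorem for an angle (Theorem \ref{angle}) by slicing. The key point is that the order $\rho$ of $f$, as defined in \eqref{order} through the global modulus $M_f(r,\Omega)$ over the whole cone, controls the growth of each complex restriction $f_I$ on the sector $\Omega_I = \{re^{I\vartheta}: r>0,\ |\vartheta|<\pi/(2\alpha)\}$ of the complex line $L_I$.

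First I would fix an arbitrary $I\in\s$ and a companion $J\perp I$, and apply the Splitting Lemma \ref{splitting} to write $f_I = F + GJ$ on $\Omega_I$, with $F,G:\Omega_I\to L_I$ holomorphic. The domain $\Omega_I$ is (a rotated copy of) a two-sided angular sector of opening $\pi/\alpha$ in the complex plane $L_I\cong\cc$, so after identifying $L_I$ with $\cc$ I may regard $F$ and $G$ as holomorphic on an angle of opening $\pi/\alpha$, continuous up to the boundary since $f$ is. Next I would transfer the hypotheses: on $\partial\Omega_I\subset\partial C(\pi/\alpha)$ we have $|F|\le|f_I|\le M$ and likewise $|G|\le|f_I|\le M$, because $|f_I(z)|^2 = |F(z)|^2 + |G(z)|^2$ (using $J\perp I$, so $F$ and $GJ$ are orthogonal in $\hh$). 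For the growth bound, from \eqref{order} and $\rho<\alpha$ pick $\rho'$ with $\rho<\rho'<\alpha$; then $M_f(r,\Omega)\le \exp(r^{\rho'})$ for all $r$ large, hence $|F(z)|,|G(z)|\le|f_I(z)|\le M_f(|z|,\Omega)\le\exp(|z|^{\rho'})$ asymptotically on $\Omega_I$. With $\rho'<\alpha$, Theorem \ref{angle} applies to both $F$ and $G$ on the sector $\Omega_I$ (possibly after a trivial enlargement of $M$ to $\max\{M,\sup_{|z|\le R_0}|f_I|\}$ on a compact core, or more cleanly by noting the asymptotic bound plus continuity is exactly the hypothesis of Theorem \ref{angle}), yielding $|F|\le M$ and $|G|\le M$ throughout $\Omega_I$.

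Having $|F|\le M$ and $|G|\le M$ is not quite enough, since it only gives $|f_I|\le M\sqrt 2$. To get the sharp bound I would instead apply Theorem \ref{angle} directly to a scalar holomorphic function built from $f_I$: for any unit quaternion $c$, the function $z\mapsto \langle f_I(z), c\rangle$ (the real part of $f_I(z)\bar c$) is a real-valued harmonic function on $\Omega_I$, and one checks that $z\mapsto \overline{f_I(z)}$-type combinations, or more directly the holomorphic function $h_c := \bar c\, F + \ldots$ obtained by a suitable splitting adapted to $c$, is holomorphic with $|h_c|\le|f_I|$; but the cleanest route is: for fixed $z$ choose $c = f_I(z)/|f_I(z)|$ and observe $|f_I(z)| = \langle f_I(z),c\rangle \le \sup$ of a harmonic function to which the Phragm\'en--Lindel\"of argument for harmonic functions applies. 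Alternatively, and most simply, I would apply Theorem \ref{angle} to the \emph{single} holomorphic function $F$ after re-splitting: for each $I$, choose $J$ so that a prescribed direction is captured, and note that $|f_I| = |F|$ is impossible in general — so the honest fix is to run the comparison-function argument of Theorem \ref{classic} (multiplying by $\omega^\delta$) but now on the sector, using that on a cone the extra boundary piece along $(-\infty,0]$ disappears once we restrict to one slice and that the weaker growth hypothesis replaces boundedness; this is precisely where the order condition $\rho<\alpha$ does the work that boundedness did in Theorem \ref{classic}.

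The main obstacle I anticipate is exactly this last point: passing from the componentwise bounds $|F|,|G|\le M$ to the sharp $|f|\le M$. The resolution I would adopt is to avoid the Splitting Lemma for the final estimate and instead apply Theorem \ref{angle} to the holomorphic (in the classical sense on $\Omega_I\cong\cc$) function obtained by a direction-dependent splitting, or to observe that $|f_I|$ itself is subharmonic so that a direct Phragm\'en--Lindel\"of argument for subharmonic functions on an angle (which is standard and whose proof is the same comparison-function computation as Theorem \ref{angle}) gives $|f_I|\le M$ on $\Omega_I$. Since $I\in\s$ was arbitrary and $\bigcup_{I\in\s}\Omega_I = C(\pi/\alpha)$, this yields $|f|\le M$ on the whole cone. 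A secondary technical point is the continuity up to the boundary and the reduction of the growth hypothesis on $M_f(r,\Omega)$ to the per-slice growth hypothesis needed by Theorem \ref{angle}; this is routine since $|f_I(q)|\le M_f(|q|,\Omega)$ for every $q\in\Omega_I$.
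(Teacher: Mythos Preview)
Your approach is the \emph{slicewise} method, which is different from the paper's proof of this particular theorem. The paper proves Theorem \ref{cones} directly in $\hh$: it picks $\gamma$ with $\rho<\gamma<\alpha$, sets $\omega^\delta(q)=e^{-\delta q^\gamma}$ (a slice preserving regular function, so $\omega^\delta f$ is slice regular by Proposition \ref{multformula}), computes $|\omega^\delta(re^{I\vartheta})|=e^{-\delta r^\gamma\cos(\gamma\vartheta)}$, shows $|\omega^\delta f|\to 0$ at $\infty$ while $|\omega^\delta f|\le M$ on the finite boundary, applies the quaternionic maximum modulus principle \ref{maximum}, and lets $\delta\to 0$. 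No splitting, no $\sqrt2$ issue.

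That said, your slicewise route is exactly the strategy the paper adopts in Section \ref{slicewise} to prove the more general Theorem \ref{conesslice} for angular domains, so your instinct is sound. What you are missing is the clean way around the $\sqrt2$ obstacle. None of your proposed fixes is wrong, but they are all more work than necessary. The paper's device (see the proof of Theorem \ref{classicslice}) is: argue by contradiction, suppose $|f(p)|>M$ at some $p\in\Omega_I$, and replace $f$ by $f\cdot\dfrac{\overline{f(p)}}{|f(p)|}$. This is still slice regular, still satisfies all the hypotheses, and now $f(p)>0$ is a positive real. In the splitting $f_I=F+GJ$ this forces $f_I(p)=F(p)$, so $|F(p)|=|f(p)|>M$, contradicting the complex Phragm\'en--Lindel\"of theorem applied to $F$ alone. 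One application of Theorem \ref{angle}, not two, and no loss of constant.

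So: your proposal would work once patched (the subharmonic route you mention is also legitimate), but it is the Section \ref{slicewise} argument rather than the Section \ref{phragmen} argument, and the paper's normalization trick is the missing ingredient that makes the slicewise proof short.
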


\begin{proof}
Choose $\gamma$ such that $\rho < \gamma<\alpha$. For $q \in C\left(\frac{\pi}{\alpha}\right)$ we define $\omega(q) = e^{-q^\gamma}$ with $q^\gamma = e^{\gamma Log(q)}$ (where $Log(q)$ is the principal logarithm of $q$). For all $\delta>0$ we set $\omega^\delta(q) = e^{-\delta q^\gamma}$ and we have that
$$|\omega^\delta(r e^{I \vartheta})| = e^{-\delta r^\gamma \cos (\gamma \vartheta)}.$$
For $-\frac{\pi}{2 \alpha}< \vartheta < \frac{\pi}{2 \alpha}$ and $\rho < \rho_1 < \gamma$ the following holds asymptotically:
$$|\omega^\delta(r e^{I \vartheta}) f(r e^{I \vartheta})| < e^{r^{\rho_1}-\delta r^\gamma \cos (\gamma \vartheta)}.$$
Since $\gamma < \alpha$, we have $-\frac{\pi}{2}<\gamma \vartheta < \frac{\pi}{2}$ so that $\cos (\gamma \vartheta)>0$. Since $\rho_1 < \gamma$ we conclude that in $C\left(\frac{\pi}{\alpha}\right)$
$$\lim_{q \to \infty} |\omega^\delta(q) f(q)| = 0.$$
Since for all $q \in \partial C\left(\frac{\pi}{\alpha}\right)$ we have $|\omega^\delta(q) f(q)| < |f(q)| \leq M$, we conclude that $\lim_{q \to q_0} |\omega^\delta(q) f(q)| \leq M$ for all $q_0 \in \partial_\infty C\left(\frac{\pi}{\alpha}\right)$. Applying the maximum modulus principle \ref{maximum}, we get $|\omega^\delta f| \leq M$ in $C\left(\frac{\pi}{\alpha}\right)$. The inequality $|f| \leq \frac{M}{|\omega|^\delta}$, which holds for all $\delta >0$, yields that $|f| \leq M$  in $C\left(\frac{\pi}{\alpha}\right)$.
\end{proof}

In the next section we will offer an alternative proof extending theorem \ref{cones} to a larger class of domains.

\section{A slicewise approach}\label{slicewise}

The proofs we gave in the previous section are intrinsic to the quaternionic setting. However, in the theory of slice regular functions it is often possible to use a different technique. Specifically (see e.g. \cite{advances}), one can apply the splitting lemma \ref{splitting} to reduce to the case of holomorphic functions of one complex variable. In this section, we employ this technique to give different proofs of theorems \ref{classic} and \ref{cones}. In fact, this allows slight variations of the hypotheses. We also prove two other results, which we were unable to obtain using a direct approach.

\begin{theorem}\label{classicslice}
Let $\Omega \subset \hh$ be a domain whose extended boundary contains the point at infinity and such that, for all $I \in \s$, $\Omega_I = \Omega \cap L_I$ is simply connected. If $f$ is a bounded slice regular function on $\Omega$ and $\limsup_{q \to q_0}|f(q)| \leq M$ for all $q_0 \in \partial \Omega$, then $|f(q)| \leq M$ for all $q \in \Omega$.
\end{theorem}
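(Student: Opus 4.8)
The plan is to reduce to the one-variable complex Phragm\'en-Lindel\"of principle (Theorem \ref{classiccomplex}) via the Splitting Lemma, exactly following the ``slicewise'' philosophy advertised at the start of the section. Fix an arbitrary $I \in \s$ and a $J \in \s$ with $J \perp I$, and write $f_I = F + GJ$ on $\Omega_I$ with $F, G : \Omega_I \to L_I$ holomorphic, as granted by Lemma \ref{splitting}. Since $L_I$ is isometrically a copy of $\cc$, we may regard $F$ and $G$ as ordinary bounded holomorphic functions on the simply connected domain $\Omega_I \subset \cc$, whose (extended) boundary contains $\infty$ because $\partial_\infty \Omega$ does and $\Omega_I$ is a planar slice through it.

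The key estimate is the pointwise comparison $|F(z)|, |G(z)| \leq |f_I(z)| = |f(z)|$ for $z \in \Omega_I$, which holds because $|f_I(z)|^2 = |F(z)|^2 + |G(z)|^2$ (using $J \perp I$, so $F(z)$ and $G(z)J$ are orthogonal in $\hh$). Hence $F$ and $G$ are bounded on $\Omega_I$, and at each finite boundary point $z_0 \in \partial \Omega_I$ we get $\limsup_{z \to z_0}|F(z)| \leq \limsup_{z \to z_0}|f(z)| \leq M$, since $\partial \Omega_I \subseteq \partial \Omega$ (a boundary point of the slice $\Omega_I$ inside $L_I$ is a boundary point of $\Omega$); the same holds for $G$. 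Theorem \ref{classiccomplex} then applies to each of $F$ and $G$ on $\Omega_I$, giving $|F(z)| \leq M$ and $|G(z)| \leq M$ for all $z \in \Omega_I$.

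To conclude, I would like to deduce $|f(z)| \leq M$ on $\Omega_I$, but the naive bound $|f_I(z)|^2 = |F(z)|^2 + |G(z)|^2 \leq 2M^2$ loses a factor of $\sqrt 2$. The standard fix is to apply the argument not to $f$ directly but to a rotated splitting, or better: for each unit quaternion $c$ with $|c|=1$, the function $f c$ is again slice regular (right multiplication by a constant preserves slice regularity), bounded by the same bound, with $\limsup_{q\to q_0}|f(q)c| = \limsup_{q\to q_0}|f(q)| \leq M$ on $\partial\Omega$. Choosing $c$ so that $f_I(z)c \in L_I$ for the given point $z$ (possible since any nonzero quaternion can be rotated into $L_I$ by a suitable right unit factor) makes the $G$-component vanish at that point, so $|f(z)| = |f_I(z)c| = |F_c(z)| \leq M$ by the one-variable result applied to the $L_I$-component $F_c$ of $(fc)_I$. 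Running this for every $z$ and every $I$ yields $|f| \leq M$ on all of $\Omega = \bigcup_{I \in \s}\Omega_I$.

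The main obstacle, and the step deserving the most care, is the passage from the componentwise bound to the bound on $|f|$ itself: one must verify that the rotation trick is legitimate, i.e. that right multiplication by a constant quaternion preserves slice regularity (it does, since the Cauchy--Riemann-type operator in Definition \ref{regularity} acts on the left) and that for a fixed $z$ one can choose the constant $c$ uniformly enough to apply Theorem \ref{classiccomplex} to a genuinely holomorphic $L_I$-valued function on the whole of $\Omega_I$ — which works because $(fc)_I = F_c + G_c J$ is a splitting valid on all of $\Omega_I$, and we only use the vanishing of $G_c$ at the single point $z$ to extract the value $|f(z)|$. A secondary point to check is the topological claim that simple connectedness of $\Omega_I$ together with $\infty \in \partial_\infty\Omega$ forces $\infty \in \partial_\infty \Omega_I$, so that the hypotheses of Theorem \ref{classiccomplex} are genuinely met on each slice.
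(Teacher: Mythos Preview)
Your proposal is correct and is essentially the paper's own argument. The paper proceeds by contradiction: assuming $|f(p)|>M$, it right-multiplies $f$ by the unit constant $c=\overline{f(p)}/|f(p)|$ to make $f(p)$ real (hence in $L_I$), splits $(fc)_I=F+GJ$, and applies Theorem~\ref{classiccomplex} to $F$ to get $|f(p)|=|F(p)|\le M$ --- exactly your ``rotation trick'' with the specific choice of $c$ that lands $f(p)$ in $\rr\subset L_I$.
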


\begin{proof}
Suppose that there exists $p \in \Omega$ such that $|f(p)|>M$. Possibly multiplying $f$ by the constant $\frac{\overline{f(p)}}{|f(p)|}$, we may suppose $f(p) > 0$. Let $I \in \s$ be such that $p \in L_I$, choose $J \in \s$ such that $J \perp I$ and let $F,G: \Omega_I = \Omega \cap L_I \to L_I$ be holomorphic functions such that $f_I = F + G J$ (see lemma \ref{splitting}). Then $f_I(p) = F(p)$. On the other hand, since $|F| \leq |f_I| \leq M$ in $\partial \Omega_I \subseteq \partial \Omega$ and $|F| \leq |f_I|$ is bounded in $\Omega_I$, we must have $|F| \leq M$ in $\Omega_I$ by the complex Phragm\'en-Lindel\"of principle \ref{classiccomplex}.
\end{proof}

\begin{remark}
We required $\Omega_I$ to be simply connected, as in the classic (complex) Phragm\'en-Lindel\"of principle \ref{classiccomplex}. Notice however that this hypothesis can be weakened (see exercise 1 in \cite{conway}).
\end{remark}

A similar proof allows us to extend theorem \ref{cones} to a larger class of domains. 

\begin{definition}
We call a slice domain $\Omega \subset \hh$ an \emph{angular domain} if, for all $I \in \s$, $\Omega_I$ is an angle $\{r e^{I(\zeta_I+\vartheta)} : r>0, |\vartheta| <\varphi_I/2\}$ for some $\zeta_I,\varphi_I$ with $\zeta_I \in \rr, 0<\varphi_I<2 \pi$. The \emph{opening} of $\Omega$ is defined to be $\sup_{I \in \s} |\varphi_I|$.
\end{definition}

The following proposition shows, once again, the surprising geometrical properties of the quaternions.

\begin{proposition}
Let $\Omega$ be an open subset of $\hh$ such that, for all $I \in \s$, $\Omega_I$ is an angle $\{r e^{I(\zeta_I+\vartheta)} : r>0, |\vartheta| <\varphi_I/2\}$. If $I \mapsto \zeta_I$ and $I \mapsto \varphi_I$ are continuous in $\s$ then $\Omega$ is automatically a slice domain.
\end{proposition}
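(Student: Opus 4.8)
The plan is to show that $\Omega$ satisfies the two defining conditions of a slice domain: that $\Omega \cap \rr \neq \emptyset$ and that each $\Omega_I$ is a domain (i.e.\ open and connected) in $L_I$. By hypothesis, each $\Omega_I$ is an angle with vertex at the origin, hence a nonempty open connected subset of $L_I$; so the only real issue is to produce a real point of $\Omega$ and, more subtly, to verify that $\Omega$ is genuinely open in $\hh$ as a whole — but this is part of the hypothesis, so what really remains is the existence of a point in $\Omega \cap \rr$.

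First I would observe that, since $\Omega_I$ is the angle $\{r e^{I(\zeta_I + \vartheta)} : r>0,\ |\vartheta| < \varphi_I/2\}$, the positive real axis $(0,+\infty)$ meets $\Omega_I$ if and only if the argument $0$ lies within $\varphi_I/2$ of $\zeta_I$ modulo $2\pi$; similarly the negative real axis meets $\Omega_I$ iff $\pi$ lies within $\varphi_I/2$ of $\zeta_I$. The key point is that $L_I = L_{-I}$, so $\Omega_I = \Omega_{-I}$, and the parametrization forces a compatibility: replacing $I$ by $-I$ sends $e^{I(\zeta_I+\vartheta)}$ to $e^{-I(\zeta_I + \vartheta)} = e^{(-I)(-\zeta_I - \vartheta)}$, so that $\zeta_{-I} = -\zeta_I$ (mod $2\pi$) and $\varphi_{-I} = \varphi_I$. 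Next I would fix a particular $I_0 \in \s$ and track what happens to $\Omega_{I}$ as $I$ moves continuously along a path in $\s$ from $I_0$ to $-I_0$; because $\zeta_I$ and $\varphi_I$ vary continuously, the ``opening toward the real axis'' varies continuously, and at the endpoint the sign of $\zeta$ has flipped. A continuity/intermediate-value argument along this path then shows that for some intermediate $I$ the angle $\Omega_I$ must straddle one of the two real half-axes — equivalently, that $\Omega_I \cap \rr \neq \emptyset$ for that $I$, which gives a real point of $\Omega$.

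The main obstacle I anticipate is making the last continuity argument precise: one must choose the right continuous quantity to apply the intermediate value theorem to (for instance, a continuous lift of $\zeta_I$ along the path, or the ``distance from $\zeta_I$ to the nearest multiple of $\pi$'' compared against $\varphi_I/2$), and one must handle carefully the fact that arguments are only defined modulo $2\pi$ and that $\varphi_I$ can be as large as (but not equal to) $2\pi$. A clean way to organize this is: if some $\Omega_I$ already contains a real point we are done; otherwise each $\Omega_I$ avoids $\rr$, so for each $I$ the angle lies strictly in one of the two open half-planes of $L_I \setminus \rr$, and one shows that which half-plane it is cannot be the same for $I$ and $-I$ (again because $\Omega_I = \Omega_{-I}$ but the two half-planes of $L_I \setminus \rr$ are interchanged by $I \mapsto -I$), contradicting the connectedness of $\s$ together with the continuity of $I \mapsto \zeta_I$. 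Either way $\Omega$ contains a real point, and, each $\Omega_I$ being a domain, $\Omega$ is a slice domain.
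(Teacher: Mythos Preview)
Your proposal is correct, and the ``clean'' argument you sketch at the end is a genuinely different route from the paper's. The paper applies the Borsuk--Ulam theorem to the continuous map $I\mapsto(\zeta_I,\varphi_I)$ from $\s\simeq S^2$ to $\rr^2$ to find a $J$ with $\zeta_J=\zeta_{-J}$; combined with the antipodal relation $\zeta_{-J}\equiv -\zeta_J \pmod{2\pi}$ this forces $\zeta_J\in\pi\zz$, so $\Omega_J$ is centred on the real axis and contains a real half-line. Your argument avoids Borsuk--Ulam entirely: assuming no $\Omega_I$ meets $\rr$, each $\Omega_I$ lies in one of the two open half-planes of $L_I\setminus\rr$, and since the bisector $e^{I\zeta_I}$ sits in that half-plane the sign function $h(I)=\mathrm{sign}(\sin\zeta_I)$ records which one. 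Continuity of $\zeta_I$ and the fact that $\sin\zeta_I\neq 0$ make $h$ locally constant, hence constant on the connected sphere $\s$; but $\Omega_I=\Omega_{-I}$ together with the swap of half-planes under $I\mapsto -I$ gives $h(-I)=-h(I)$, a contradiction. This is more elementary --- it uses only connectedness of $\s$ rather than a genuine $S^2$-versus-$\rr^2$ topological obstruction --- while the paper's Borsuk--Ulam argument pinpoints a specific slice whose bisector is exactly a real half-line.

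One small comment: your first outlined approach (an intermediate-value argument along a path from $I_0$ to $-I_0$) is really the same idea restricted to a one-dimensional subsphere, and the lifting issues you worry about are handled by the same device --- track $\mathrm{sign}(\sin\zeta_I)$ rather than $\zeta_I$ itself. So the two versions of your argument are essentially one, and the second formulation is indeed the cleaner one to write down.
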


\begin{proof}
In order to prove our assertion it suffices to show that at least one slice $\Omega_I$ contains a real half line. Notice that, for all $I \in \s$, $\Omega_I = \Omega_{-I}$. Hence $\varphi_I= \varphi_{-I}$ and $\zeta_I = 2 k \pi - \zeta_{-I}$ for some $k \in \zz$. Since
$$I \mapsto (\zeta_I,\varphi_I)$$
is a continuous function from $\s \simeq S^2$ to $\rr^2$, by the Borsuk-Ulam theorem (see Corollary 9.3 in \cite{massey}) there exist two antipodal points of $\s$ having the same image. In particular, there exists a $J \in \s$ such that $\zeta_J = \zeta_{-J}$ and we conclude that $\zeta_J = k \pi$ for some $k \in \zz$.
\end{proof}

The \emph{order} of a slice regular function $f$ on an angular domain $\Omega$ is defined by equations (\ref{modulus}) and (\ref{order}) as in the case of circular cones.

\begin{theorem}\label{conesslice}
Let $f$ be a slice regular function on an angular domain $\Omega$ of opening $\frac{\pi}{\alpha}$. Suppose $f$ is continuous up to the boundary and has order $\rho<\alpha$. If there exists an $M\geq 0$ such that $|f|\leq M$ in $\partial \Omega$ then $|f|\leq M$ in $\Omega$. 
\end{theorem}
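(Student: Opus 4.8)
The plan is to repeat the slicewise argument of Theorem~\ref{classicslice}, using the Phragm\'en--Lindel\"of principle for angles (Theorem~\ref{angle}) in place of the one for simply connected domains (Theorem~\ref{classiccomplex}). Suppose, for a contradiction, that $|f(p)|>M$ for some $p\in\Omega$. Right multiplication by a constant preserves slice regularity, so replacing $f$ by $f\cdot\frac{\overline{f(p)}}{|f(p)|}$ we may assume $f(p)=|f(p)|>M$; this affects neither the order of $f$, nor its continuity up to $\partial\Omega$, nor the bound $|f|\le M$ on $\partial\Omega$, since the constant is unimodular. Then I fix $I\in\s$ with $p\in L_I$, pick $J\in\s$ with $J\perp I$, and apply the Splitting Lemma~\ref{splitting} to write $f_I=F+GJ$ with $F,G:\Omega_I\to L_I$ holomorphic.

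Since $\{1,I,J,IJ\}$ is an orthonormal basis of $\hh$ and $F,G$ are $L_I$-valued, the maps $F$ and $G$ are obtained from $f_I$ by orthogonal projection onto the plane $L_I$ and the plane $L_IJ$ respectively; hence they extend continuously to $\partial\Omega_I$ because $f$ is continuous up to $\partial\Omega$, and $|F|\le|f_I|$, $|G|\le|f_I|$ pointwise. Moreover $f_I(p)=f(p)\in\rr\subseteq L_I$, so comparing the $L_I$- and $L_IJ$-components in $f_I(p)=F(p)+G(p)J$ forces $G(p)=0$ and $F(p)=f(p)$, whence $|F(p)|=f(p)>M$.

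It then remains to show $|F|\le M$ on $\Omega_I$, which contradicts the previous line. Identifying $L_I\cong\cc$, the angular-domain hypothesis says that $\Omega_I$ is, after the modulus-preserving biholomorphic rotation $z\mapsto e^{-I\zeta_I}z$, the standard angle of opening $\varphi_I\le\pi/\alpha$, i.e.\ of opening $\pi/\alpha_I$ with $\alpha_I:=\pi/\varphi_I\ge\alpha$. On $\partial\Omega_I\subseteq\partial\Omega$ we have $|F|\le|f_I|\le M$. For the growth condition I fix $\rho_1$ with $\rho<\rho_1<\alpha\le\alpha_I$; unwinding the definition of the order $\rho$ gives $M_f(r,\Omega)\le\exp(r^{\rho_1})$ for all large $r$, so $|F(z)|\le|f_I(z)|\le M_f(|z|,\Omega)\le\exp(|z|^{\rho_1})$ asymptotically, with $\rho_1$ strictly below the exponent $\alpha_I$ of the angle $\Omega_I$. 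Theorem~\ref{angle} then yields $|F|\le M$ on $\Omega_I$, the desired contradiction.

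I do not anticipate a genuine obstacle, as this is a minor variation of the proof of Theorem~\ref{classicslice}. The points requiring care are: the normalization making $f(p)$ real (without it the Splitting Lemma only yields $|f_I(p)|^2=|F(p)|^2+|G(p)|^2\le 2M^2$, too weak to conclude); the observation that a single exponent $\rho_1<\alpha$ suffices because every slice $\Omega_I$ has opening at most $\pi/\alpha$, so $\rho_1<\alpha\le\alpha_I$ for all $I$; and the routine passage from the $\limsup$ defining $\rho$ to the asymptotic bound $M_f(r,\Omega)\le\exp(r^{\rho_1})$.
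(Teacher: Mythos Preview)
Your proof is correct and follows exactly the approach indicated in the paper, which simply states that the argument is ``completely analogous'' to that of Theorem~\ref{classicslice} with Theorem~\ref{angle} replacing Theorem~\ref{classiccomplex}. You have filled in the details carefully, including the normalization of $f(p)$, the passage from order $\rho$ to an asymptotic bound with exponent $\rho_1\in(\rho,\alpha)$, and the observation that each slice has opening at most $\pi/\alpha$ so that $\rho_1<\alpha\le\alpha_I$.
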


The proof of theorem \ref{conesslice} is completely analogous to that of theorem \ref{classicslice} and it makes use of the Phragm\'en-Lindel\"of principle for complex angles \ref{angle}. As in the complex case, the hypothesis that $\rho<\alpha$ cannot be weakened. Indeed, we have the following.

\begin{example}
We can define a slice regular function $f$ of order $\rho>0$ on $C\left(\frac{\pi}{\rho}\right)$ by setting $f(q) = e^{q^\rho}$ where $q^\rho = e^{\rho Log(q)}$. We notice that, for all $q = r e^{\pm I \frac{\pi}{2\rho}} \in \partial C\left(\frac{\pi}{\rho}\right)$, $|f(q)|= |\exp(r^\rho e^{\pm I \frac{\pi}{2}})| = |\exp(\pm I r^\rho)| = 1$, while the function $f$ is unbounded in $C\left(\frac{\pi}{\rho}\right)$.
\end{example}

Nevertheless, as in the complex case, when a function $f$ has order $\rho$ in angular domain of opening $\frac{\pi}{\rho}$ we can control the growth of $f$ in terms of its \emph{type} $\sigma$, defined as
\begin{equation}\label{type}
\sigma = \limsup_{r \to +\infty} \frac{\ln^+ M_f(r, \Omega)}{r^\rho}.
\end{equation}

\begin{theorem}\label{conessharp}
Let $\Omega$ be an angular domain with $\Omega_I = \{r e^{I(\zeta_I + \vartheta)} : r>0, | \vartheta|<\varphi_I/2\}$ for all $I \in \s$. Let $f$ be a slice regular function of order $\rho$ and type $\sigma$ on $\Omega$, continuous up to the boundary. If the opening of $\Omega$ is not greater than $\frac{\pi}{\rho}$ and $|f|$ is bounded by $M$ in $\partial \Omega$, then for all $I \in \s$
\begin{equation}
| f(r e^{I(\zeta_I +  \vartheta)}) | \leq M e^{\sigma r^\rho \cos(\rho \vartheta)}
\end{equation}
for $r>0$ and $|\vartheta|<\varphi_I/2$.
\end{theorem}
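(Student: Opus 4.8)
The plan is to argue one slice at a time, reducing the estimate at a single point to a sharp Phragm\'en--Lindel\"of bound for a scalar holomorphic function on a complex angle, in the spirit of the proofs of Theorems \ref{classicslice} and \ref{conesslice}.

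Fix $I\in\s$ and $q_0=re^{I(\zeta_I+\vartheta)}\in\Omega$ with $|\vartheta|<\varphi_I/2$; the goal is to show $|f(q_0)|\leq Me^{\sigma r^\rho\cos(\rho\vartheta)}$. If $f(q_0)=0$ there is nothing to prove, so assume $f(q_0)\neq0$ and replace $f$ by $\tilde f=f\cdot\overline{f(q_0)}/|f(q_0)|$: right multiplication by a constant preserves slice regularity and does not change $|f|$, hence it leaves the order $\rho$, the type $\sigma$ and the bound $M$ on $\partial\Omega$ unchanged, while $\tilde f(q_0)=|f(q_0)|>0$. Choosing $J\perp I$ and applying the Splitting Lemma \ref{splitting}, write $\tilde f_I=F+GJ$ with $F,G\colon\Omega_I\to L_I$ holomorphic. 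Since $\tilde f_I(q_0)=\tilde f(q_0)$ is real, uniqueness of the decomposition $\hh=L_I\oplus L_IJ$ forces $F(q_0)=|f(q_0)|$ and $G(q_0)=0$, so $|f(q_0)|=|F(q_0)|$. Moreover $|F|\leq|\tilde f_I|\leq M$ on $\partial\Omega_I\subseteq\partial\Omega$, and $M_F(\cdot)\leq M_f(\cdot,\Omega)$, so $F$ has order $\leq\rho$ and type $\leq\sigma$. After a rotation of $L_I$ we may assume $\zeta_I=0$, and the problem becomes the following complex statement, applied with $\varphi=\varphi_I\leq\pi/\rho$: \emph{if $F$ is holomorphic on the angle $S=\{re^{I\vartheta}:r>0,\ |\vartheta|<\varphi/2\}$, continuous up to the boundary, of order $\leq\rho$ and type $\leq\sigma$, and $|F|\leq M$ on $\partial S$, then $|F(re^{I\vartheta})|\leq Me^{\sigma r^\rho\cos(\rho\vartheta)}$ for $r>0$, $|\vartheta|<\varphi/2$.}

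To prove this, fix $\varepsilon>0$ and set $\omega_\varepsilon(z)=e^{-(\sigma+\varepsilon)z^\rho}$ with $z^\rho$ the principal power (legitimate since $\varphi<2\pi$), so that $|\omega_\varepsilon(re^{I\vartheta})|=e^{-(\sigma+\varepsilon)r^\rho\cos(\rho\vartheta)}$. Since $\rho\varphi/2\leq\pi/2$ we have $\cos(\rho\vartheta)\geq0$ on $S$, hence $|\omega_\varepsilon F|\leq|F|\leq M$ on $\partial S$ (the vertex included: there $\omega_\varepsilon\to1$ and $|F|\leq M$ by continuity, as $0\in\partial\Omega$). The essential and only delicate point is that $\omega_\varepsilon F$ is \emph{bounded} on $S$: granting this, the classical Phragm\'en--Lindel\"of principle \ref{classiccomplex} gives $|\omega_\varepsilon F|\leq M$ on $S$, i.e.\ $|F(re^{I\vartheta})|\leq Me^{(\sigma+\varepsilon)r^\rho\cos(\rho\vartheta)}$, and letting $\varepsilon\to0^+$ concludes. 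For the boundedness one cannot rely on a single exponential factor near the edges of $S$, where $\cos(\rho\vartheta)$ vanishes; instead one exploits that a narrower angle has a larger critical order. By the type hypothesis choose $r_0$ with $M_F(r)\leq e^{(\sigma+\varepsilon/2)r^\rho}$ for $r\geq r_0$, and set $\vartheta_\varepsilon=\tfrac1\rho\arccos\tfrac{\sigma+\varepsilon/2}{\sigma+\varepsilon}\in(0,\tfrac{\pi}{2\rho})$; then a direct estimate gives $|\omega_\varepsilon F|\leq1$ whenever $r\geq r_0$ and $|\vartheta|\leq\vartheta_\varepsilon$. On each of the (possibly empty) edge angles $T^{\pm}=\{\vartheta_\varepsilon<\pm\vartheta<\varphi/2\}$, which have opening $<\varphi/2\leq\tfrac{\pi}{2\rho}<\tfrac{\pi}{\rho}$, the function $\omega_\varepsilon F$ has order $\leq\rho$, strictly below the critical order of $T^{\pm}$, and on $\partial T^{\pm}$ it is bounded by $\max\{M,1,C_\varepsilon\}$ (the constant $C_\varepsilon$ taking care of $r\leq r_0$ via continuity); Theorem \ref{angle} then gives $|\omega_\varepsilon F|\leq\max\{M,1,C_\varepsilon\}$ on $T^{\pm}$. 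Combined with the bound on $\{|\vartheta|\leq\vartheta_\varepsilon\}$ and compactness near the vertex, this shows $\omega_\varepsilon F$ is bounded on $S$.

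The crux of the argument is thus the boundedness of $\omega_\varepsilon F$ on the \emph{critical} angle $\varphi=\pi/\rho$, obtained by peeling off thin edge angles where the hypothesis on the order lets one invoke Theorem \ref{angle}; the reduction to the scalar complex case via the Splitting Lemma and the passage $\varepsilon\to0^+$ are routine, the only subtlety being the behaviour at the vertex, which is covered by the assumption that $f$ is continuous up to the boundary.
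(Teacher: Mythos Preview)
Your proof is correct and follows the same overall strategy as the paper: reduce to a single slice via the Splitting Lemma (after normalising $f(q_0)$ to be positive so that $|f(q_0)|=|F(q_0)|$), observe that $F$ inherits the boundary bound, the order and the type, and then invoke the sharp complex Phragm\'en--Lindel\"of estimate on the angle $\Omega_I$.

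The one genuine difference is that the paper simply cites the complex result (theorem~22 in \cite{levin}) at this point, whereas you supply a self-contained proof of it: multiplying by $\omega_\varepsilon(z)=e^{-(\sigma+\varepsilon)z^\rho}$, establishing boundedness of $\omega_\varepsilon F$ on the critical angle by splitting off a central sector (direct estimate from the type hypothesis) and two thin edge sectors of subcritical opening (to which Theorem~\ref{angle} applies), and then appealing to Theorem~\ref{classiccomplex} before letting $\varepsilon\to0^+$. This extra work is sound --- the edge sectors have opening at most $\varphi/2\leq\pi/(2\rho)$, so their critical order exceeds $\rho$, and $|\omega_\varepsilon|\leq1$ there ensures $\omega_\varepsilon F$ has order $\leq\rho$ --- and it makes your argument independent of the external reference, at the cost of some length. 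The paper's version is shorter but relies on Levin; yours is self-contained within the tools already stated in the paper.
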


\begin{proof}
Suppose that there exists $p = r e^{I(\zeta_I +  \vartheta)} \in \Omega_I$ such that $$|f(p)| > M e^{\sigma r^\rho \cos(\rho\vartheta)}.$$ As in the proof of theorem \ref{classicslice}, we may suppose $f(p) > 0$. Choosing $J \in \s$ with $J \perp I$ and holomorphic functions $F,G: \Omega_I \to L_I$ such that $f_I = F + G J$, we have $f_I(p) = F(p)$. Now, $|F| \leq |f_I| \leq M$ in $\partial \Omega_I$ and $F$ has order less than or equal to $\rho$ and type less than or equal to $\sigma$ in $\Omega_I$. By theorem 22 in \cite{levin}, we conclude $|F(p)| \leq M e^{\sigma r^\rho \cos(\rho\vartheta)}$, a contradiction.
\end{proof}

An analogous proof allows us to derive the quaternionic version of theorem \ref{strip}.

\begin{definition}
We call a slice domain $\Omega \subset \hh$ a \emph{strip domain} if, for all $I \in \s$, there exist a line $\ell_I$ in $L_I$ and a positive real number $\gamma_I$ such that $\Omega_I$ is the strip $\{z \in L_I : |z - \ell_I| < \gamma_I/2\}$. The \emph{width} of $\Omega$ is defined to be $\sup_{I \in \s} |\gamma_I|$.
\end{definition}

\begin{theorem}
Let $f$ be a slice regular function on a strip domain $\Omega$ of width $\gamma$, continuous up to the boundary. Suppose that $|f(q)| \leq N \exp(e^{k|q|})$ in $\Omega$ for some positive constants $N$ and $k<\frac{\pi}{\gamma}$. If there exists an $M\geq 0$ such that $|f|\leq M$ in $\partial \Omega$ then $|f|\leq M$ in $\Omega$. 
\end{theorem}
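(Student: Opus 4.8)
The plan is to mimic exactly the slicewise strategy used for Theorems \ref{classicslice}, \ref{conesslice} and \ref{conessharp}: reduce the quaternionic statement to the classical complex Phragm\'en-Lindel\"of principle for a strip, Theorem \ref{strip}, via the Splitting Lemma \ref{splitting}. So I would argue by contradiction. Suppose there exists $p \in \Omega$ with $|f(p)| > M$. Since $\Omega$ is a slice domain (hence, in particular, $p$ lies in some $L_I$), pick $I \in \s$ with $p \in L_I$, pick $J \in \s$ with $J \perp I$, and write $f_I = F + G J$ with $F, G : \Omega_I \to L_I$ holomorphic. Exactly as before, after possibly multiplying $f$ by the unit constant $\frac{\overline{f(p)}}{|f(p)|}$ we may assume $f(p) > 0$, so that $f_I(p) = F(p)$ and $|F(p)| = |f(p)| > M$.

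Next I would verify that $F$ satisfies the hypotheses of the complex Theorem \ref{strip} on the strip $\Omega_I$. Two things are needed. First, the boundary bound: $\partial \Omega_I \subseteq \partial \Omega$ — this uses that $\Omega_I = \Omega \cap L_I$ and that $L_I$ is a two-real-dimensional affine subspace, so a finite boundary point of $\Omega_I$ inside $L_I$ is a boundary point of $\Omega$; combined with continuity up to the boundary and $|f| \le M$ on $\partial \Omega$, this gives $|F| \le |f_I| \le M$ on $\partial \Omega_I$. Second, the growth bound: from $|f(q)| \le N \exp(e^{k|q|})$ in $\Omega$ and $|F| \le |f_I|$ we get $|F(z)| \le N \exp(e^{k|z|})$ for $z \in \Omega_I$, with the \emph{same} constant $k$. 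The width of the complex strip $\Omega_I$ is $\gamma_I \le \gamma$, so in the notation of Theorem \ref{strip} (strip of width $2 \gamma_{\mathrm{cx}}$) we have $\gamma_{\mathrm{cx}} = \gamma_I/2 \le \gamma/2$, and the admissibility condition $k < \frac{\pi}{2 \gamma_{\mathrm{cx}}} = \frac{\pi}{\gamma_I}$ follows from the hypothesis $k < \frac{\pi}{\gamma}$ since $\gamma_I \le \gamma$. Hence Theorem \ref{strip} applies and forces $|F| \le M$ throughout $\Omega_I$, contradicting $|F(p)| > M$.

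I expect the only genuinely delicate point to be bookkeeping about the width: one must be careful that the paper's convention ``strip of width $\gamma$'' for $\Omega$ means the band $\{z : |z - \ell_I| < \gamma_I/2\}$, i.e. total width $\gamma_I$, whereas Theorem \ref{strip} is stated for ``width $2\gamma$'', i.e. half-width $\gamma$; matching these so that the constant $k$ lands strictly below the critical threshold $\pi/\gamma_I \ge \pi/\gamma$ is where the hypothesis $k < \pi/\gamma$ gets used, and it should be noted explicitly. A secondary small point is that, as in the remark after Theorem \ref{classicslice}, Theorem \ref{strip} as stated wants $f$ continuous up to the boundary and the estimate on $\limsup$; here continuity up to $\partial \Omega$ is assumed, and $F = \frac{1}{2}(f_I - J^{-1} f_I J)$ (or the analogous explicit expression from the Splitting Lemma) is continuous up to $\partial \Omega_I$ as well, so no subtlety arises there. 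Everything else is a verbatim transcription of the earlier arguments, so I would keep the write-up short: ``The proof is completely analogous to that of Theorem \ref{classicslice}, using the Phragm\'en-Lindel\"of principle for complex strips \ref{strip} in place of \ref{classiccomplex}, and noting that a strip domain of width $\gamma$ has complex slices of width at most $\gamma$, so that the condition $k<\frac{\pi}{\gamma}$ guarantees the applicability of \ref{strip} on each $\Omega_I$.''
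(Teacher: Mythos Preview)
Your proposal is correct and matches the paper's approach exactly: the paper does not write out a proof but simply states that ``an analogous proof'' (to those of Theorems \ref{classicslice} and \ref{conessharp}) yields the result, which is precisely the slicewise reduction via the Splitting Lemma to the complex strip theorem \ref{strip} that you carry out. Your bookkeeping on the width conventions (so that $k<\pi/\gamma\le\pi/\gamma_I$ gives the admissibility condition on each slice) is the only point requiring care, and you handle it correctly.
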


The slicewise approach adopted in this section has an important bearing for entire functions, i.e. for slice regular functions on $\Omega = \hh$. Indeed, if we define the order and the type of the entire function $f$ by equations (\ref{modulus}), (\ref{order}) and (\ref{type}), then the quaternionic Liouville theorem proven in \cite{advances} generalizes as follows.

\begin{theorem}
Let $f$ be a quaternionic entire function of first order at most, having type $0$. In other words, for all $\varepsilon >0$ we suppose $|f(q)| < e^{\varepsilon |q|}$ when $|q|$ is large enough. If, for some $I \in \s$, the plane $L_I$ contains a line on which $|f|$ is bounded then $f$ is constant.
\end{theorem}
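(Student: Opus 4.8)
The plan is to use the Splitting Lemma \ref{splitting} to reduce the statement to a classical fact about holomorphic functions of one complex variable, to prove that fact by means of the Phragm\'en--Lindel\"of principle in a half-plane, and finally to return to $\hh$ via the identity principle \ref{identity}.

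First I would fix an $I \in \s$ such that $L_I$ contains a line $\ell$ with $|f| \le M'$ on $\ell$ for some $M' \ge 0$, and choose $J \in \s$ with $J \perp I$. Since $\hh$ is a slice domain and $\hh \cap L_I = L_I$ is a copy of $\cc$, the Splitting Lemma \ref{splitting} provides entire holomorphic functions $F,G : L_I \to L_I$ with $f_I = F + GJ$. Because $GJ$ is orthogonal to $L_I$, one has $|f_I|^2 = |F|^2 + |G|^2$, so that $|F|, |G| \le |f_I| \le |f|$ everywhere on $L_I$; in particular $F$ and $G$ inherit the growth bound $|F(z)|, |G(z)| < e^{\varepsilon |z|}$ for $|z|$ large (every $\varepsilon > 0$) --- that is, they have order at most $1$ and type $0$ --- and $|F|, |G| \le M'$ on $\ell$.

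The heart of the argument is the following one-variable lemma, which I would isolate: \emph{an entire function $h$ of order at most $1$ and type $0$ that is bounded on some line $\ell \subset \cc$ is constant.} To prove it, compose $h$ with an affine bijection $\varphi(w) = aw + b$, with $|a| = 1$, mapping $\rr$ onto $\ell$; then $h \circ \varphi$ is entire, bounded by $M'$ on $\rr$, and still of order at most $1$ and type $0$, since $|h(\varphi(w))| < e^{\varepsilon |w| + \varepsilon |b|}$ for $|w|$ large. Thus we may assume $\ell = \rr$. Each of the two closed half-planes bounded by $\rr$ is an angle of opening $\pi$ on which $h$ is continuous up to the boundary, bounded there by $M'$, of order at most $\rho = 1$ and type at most $\sigma = 0$; hence Theorem 22 in \cite{levin}, applied with these values of $\rho$ and $\sigma$, gives $|h| \le M'$ throughout each half-plane. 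Therefore $|h| \le M'$ on all of $\cc$, and $h$ is constant by Liouville's theorem.

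Applying the lemma to $F$ and to $G$, we conclude that $f_I = F + GJ$ is constant on $L_I$. Since $\hh$ is a slice domain and $L_I = \hh \cap L_I$ certainly has accumulation points, the identity principle \ref{identity} --- comparing $f$ with the constant function equal to $f_I(0)$ --- forces $f$ to be constant on all of $\hh$. The main obstacle is the borderline nature of the hypothesis: $f$, hence $F$ and $G$, may have order exactly $1$, so the elementary Phragm\'en--Lindel\"of principle for an angle (Theorem \ref{angle}), which requires the order to be \emph{strictly} smaller than the reciprocal of the opening, does not apply to a half-plane. One genuinely needs the type-sensitive refinement of \cite{levin}, and it is precisely the vanishing of the type that turns its conclusion into boundedness --- and so, via Liouville, into constancy --- rather than into merely subexponential growth.
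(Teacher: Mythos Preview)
Your proof is correct and follows the same route as the paper's: split $f_I = F + GJ$, apply the one-variable result from \cite{levin} to $F$ and $G$, and conclude via the identity principle. The only difference is cosmetic --- where the paper invokes the corollary to Theorem~22 of \cite{levin} as a black box, you unpack that corollary by applying Theorem~22 itself to the two half-planes and finishing with Liouville.
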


\begin{proof}
Let $J \in \s$ be orthogonal to $I$ and let $F,G: L_I \to L_I$ be holomorphic functions such that $f_I = F + G J$. By the corollary to theorem 22 in \cite{levin}, $F$ and $G$ are constant. Hence $f_I$ is constant and, by the identity principle \ref{identity}, $f$ must be constant too.
\end{proof}


\bibliography{Phragmen}

\bibliographystyle{abbrv}


\end{document}